\documentclass[11pt]{article}

\usepackage{subfigure}
\usepackage{graphicx}
\usepackage{amsmath, amsthm, amssymb}
\usepackage{psfrag} 
\usepackage{array}

\usepackage[vcentering,dvips]{geometry}
\geometry{papersize={195mm,270mm},total={150mm,230mm}}

\newtheorem{theorem}{Theorem}[section]

\newtheorem{corollary}[theorem]{Corollary}

\newtheorem{remark}[theorem]{Remark}


\begin{document}

\title{On Robustness Properties of Beta Encoders and Golden Ratio Encoders}

\author{Rachel Ward \thanks{
R. Ward is a graduate student in the Program in Applied and Computational Mathematics, Princeton University, Princeton, New Jersey 08544  (e-mail: rward@princeton.edu). This research project was supported by an NSF graduate research fellowship. } }


\maketitle

\begin{abstract}
The beta-encoder was recently proposed as a quantization scheme for analog-to-digital conversion; in contrast to classical binary quantization, in which each analog sample $x \in [-1,1]$ is mapped to the first $N$ bits of its base-2 expansion, beta-encoders replace each sample $x$ with its expansion in a base $\beta$ between $1 < \beta < 2$.  This expansion is non-unique when $1 < \beta < 2$, and the beta-encoder exploits this redundancy to correct inevitable errors made by the quantizer component of its circuit design.  

The multiplier element of the beta-encoder will also be imprecise; effectively, the true value $\beta$ at any time can only be specified to within an interval $[ \beta_{low}, \beta_{high} ]$.  This problem was addressed by the golden ratio encoder, a close relative of the beta-encoder that does not require a precise multiplier.  However, the golden ratio encoder is susceptible to integrator leak in the delay elements of its hardware design, and this has the same effect of changing $\beta$ to an unknown value.  

In this paper, we present a method whereby exponentially precise approximations to the value of $\beta$ in both golden ratio encoders and beta encoders can be recovered amidst imprecise circuit components from the truncated beta-expansions of a "test" number $x_{test} \in [-1,1]$, and its negative counterpart, $-x_{test}$.   That is, beta-encoders and golden ratio encoders are robust with respect to unavoidable analog component imperfections that change the base $\beta$ needed for reconstruction.
\\ \\
{\bf Keywords:} A/D conversion, beta-encoders, beta-expansions, golden ratio encoder, quantization, redundancy, sampling, sigma-delta

\end{abstract}

\section{Introduction}
Beta-encoders with error correction were recently introduced in $\cite{3}$ as quantization algorithms in analog-to-digital (A/D) conversion that are simultaneously robust with respect to quantizer imperfections (like Sigma-Delta schemes) and efficient in their bit-budget use (like Pulse Code Modulation schemes).  For a detailed discussion comparing Sigma Delta and Pulse Code Modulation (PCM) quantization schemes, we refer the reader to $\cite{6}$ and $\cite{3}$.  Recall that after the sampling step in analog-to-digital conversion of a bandlimited function $f(t)$ with $L^{\infty}$ norm $|| f || _{\infty} \leq 1$, an N-bit PCM quantizer simply replaces each sampled function value $x = f(t_n)$ with the first N bits of its truncated binary expansion.  Beta-encoders are similar to PCM; they replace each sampled function value $x$ with a truncated series expansion in a base $\beta$, where $1 < \beta < 2$, and with binary coefficients.  Clearly, if $\beta = 2$, then this algorithm coincides with PCM.  However, whereas the binary expansion of almost every real number is unique, for every $\beta \in (1,2)$, there exist a continuum of different $\beta$ expansions of almost every $x$ in $(0,1]$ (see $\cite{8}$).  It is precisely this redundancy that gives beta-encoders the freedom to correct errors caused by imprecise quantizers shared by Sigma Delta schemes.  Whereas in Sigma Delta, a higher degree of robustness is achieved via finer sampling, beta-encoders are made more robust by choosing a smaller value of $\beta$ as the base for expansion.  
\\
\\
Although beta-encoders as discussed in $\cite{3}$ are robust with respect to quantizer imperfections, these encoders are not as robust with respect to imprecisions in other components of their circuit implementation.  Typically, beta-encoders require a multiplier in which real numbers are multiplied by $\beta$.  Like all analog circuit components, this multiplier will be imprecise; that is, although a known value $\beta_0$ may be set in the circuit implementation of the encoder, thermal fluctuations and other physical limitations will have the effect of changing the true multiplier to an unknown value $\beta \in [\beta_{low}, \beta_{high}]$ within an interval of the pre-set value $\beta_0$.   The true value $\beta$ will vary from device to device, and will also change slowly in time within a single device.  This variability, left unaccounted for, disqualifies the beta-encoder as a viable quantization method since the value of $\beta$ must be known with exponential precision in order to reconstruct a good approximation to the original signal from the recovered bit streams.
\\
\\
We overcome this potential limitation of the beta-encoder by introducing a method for recovering $\beta$ from the encoded bitstreams of a real number $x \in [-1,1]$ and its negative, $-x$.  Our method incorporates the techniques used in $\cite{2}$, but our analysis is simplified using a transversality condition, as defined in $\cite{4}$, for power series with coefficients in $\{-1,0,1\}$.  As the value of $\beta$ can fluctuate within an interval $[\beta_{low}, \beta_{high}]$ over time, our recovery technique can be repeated at regular intervals during quantization (e.g., after the quantization of every 10 samples).  
\\
\\
The golden ratio encoder (GRE) was proposed in $\cite{5}$ as a quantizer that shares the same robustness and exponential rate-distortion properties as beta-encoders, but that does not require an explicit multiplier in its circuit implementation.  GRE functions like a beta-encoder in that it produces beta-expansions of real numbers; however, in GRE, $\beta$ is fixed at the value of the golden ratio, $\beta = \phi = \frac{1+\sqrt{5}}{2}$.  The relation $\phi^2 = \phi + 1$ characterizing the golden ratio permits elimination of the multiplier from the encoding algorithm.  Even though GRE does not require a precise multiplier, component imperfections such as integrator leakage in the implementation of GRE may still cause the true value of $\beta$ to be slightly larger than $\phi$; in practice it is reasonable to assume $\beta \in [\phi, 1.1\phi]$.   Our method for recovering $\beta$ in general beta-encoders can be easily extended to recovering $\beta$ in the golden ratio encoder.
\\
\\
The paper is organized as follows:
\\
\begin{enumerate}
\item In sections I (a) and I (b), we review relevant background on beta-encoders and golden ratio encoders, respectively.
\item In section II, we introduce a more realistic model of GRE that takes into account the effects of integrator leak in the delay elements of the circuit.  We show that the output of this revised model still correspond to truncated beta-expansions of the input, but in an unknown base $\beta$ that differs from the pre-set value.
\item Section III describes a way to recover the unknown value of $\beta$ up to arbitrary precision using the bit streams of a "test" number $x \in [-1,1]$, and $-x$.  The recovery scheme reduces to finding the root of a polynomial with coefficients in $\{-1,0,1 \}$.
\item Section IV extends the recovery procedure of the previous section to the setting of beta-encoders having leakage in the (single) delay element of their implementation.  The analysis here is completely analogous to that of Section III.
\end{enumerate}

\subsection{A brief review of beta-encoders}
In this section, we summarize certain properties of beta-encoders with error correction, from the perspective of encoders which produce beta expansions with coefficients in $\{-1,1\}$.  For more details on beta-encoders, we refer the reader to $\cite{3}$.
\\
\\
We start from the observation that given $\beta \in (1,2]$, every real number $x \in [-\frac{1}{\beta - 1},\frac{1}{\beta - 1}]$ admits a sequence $(b_j)_{j \in N}$, with $b_j \in \{-1,1\}$, such that 
\begin{equation}
x = \sum_{j=1}^{\infty} b_j \beta^{-j}.
\label{ex}
\end{equation}
Under the transformation $\tilde{b}_j = \frac{b_j + 1}{2}$, $\eqref{ex}$ is equivalent to the observation that every real number $y \in [0,\frac{1}{\beta - 1}]$ admits a beta-expansion in base $\beta \in (1,2]$, with coefficients $\tilde{b}_j \in \{0,1\}$.  Accordingly, all of the results that follow have straightforward analogs in terms of $\{0,1\}$-beta-expansions; see $\cite{3}$ for more details.
\\ \\
One way to compute a sequence $(b_j)_{j \in N}$ that satisfies $\eqref{ex}$ is to run the iteration
\begin{eqnarray}
u_1 &=& \beta x \nonumber \\
b_1 &=& Q(u_1) \nonumber \\
\textrm{for $j \geq 1:$ } u_{j+1} &=& \beta(u_j - b_j) \nonumber \\
b_{j+1} &=& Q(u_{j+1}) 
\label{iter}
\end{eqnarray}
where the quantizer $Q(u)$ is simply the {\it sign}-function,
\begin{eqnarray}
	Q(u)
          &=& \left\{\begin{array}{cl}
	-1,  & u \leq 0 \\
	1,   &u > 0. 
	  \end{array}
	   \right.
          \label{(Q)}
\end{eqnarray}
For $\beta = 2$, the expansion $\eqref{ex}$ is unique for almost every $x \in [-1,1]$; however, for $ \beta \in (1,2)$, there exist uncountably many expansions of the type $\eqref{ex}$ for any $x \in [-1,1]$  (see $\cite{8}$).  Because of this redundancy in representation, beta encoders are robust with respect to quantization error, while PCM schemes are not.  We now explain in more detail what we mean by quantization error.  The quantizer $Q(u)$ in $\eqref{(Q)}$ is an idealized quantizer; in practice, one has to deal with quantizers that only approximate this ideal behavior.  A more realistic model is obtained by replacing $Q(u)$ in $\eqref{(Q)}$ with a "flaky" version $Q^{\nu}(u)$, for which we know only that
\begin{equation}
Q^{\nu}(u)  = \left\{\begin{array}{cl}
	-1,  & u  <  -\nu \\
	1,   &u  \geq \nu \\
	-1 \textrm{ or } 1, &-\nu \leq u  \leq \nu.
	   \end{array}\right.
         \label{(fqs)}
\end{equation}
In practice, $\nu$ is a quantity that is not known exactly, but over the magnitude of which we have some control, e.g. $|\nu| \leq \epsilon$ for a known $\epsilon$.  This value $\epsilon$ is called the {\it tolerance} of the quantizer.   We shall call a quantization scheme {\it robust} with respect to quantization error if, for some $\epsilon > 0$, the worst approximation error produced by the quantization scheme can be made arbitrarily small by allowing a sufficiently large bit budget, even if the quantizer used in its implementation is imprecise to within a tolerance $\epsilon$.  According to this definition, the naive $\{-1,1\}$-binary expansion is not robust.  More specifically, suppose that a flaky quantizer $Q^\nu(u)$ is used in $\eqref{iter}$ to compute the base-2 expansion of a number $x \in [-1,1]$ which is sufficiently small that $|2x| \leq \nu$.  Since $2x$ is within the "flaky" zone for $Q^{\nu}(u)$, if $b_1 = Q^{\nu}(2x)$ is assigned incorrectly; i.e., if $b_1$ differs from the sign of $x$, then no matter how the remaining bits are assigned, the difference between $x$ and the number represented by the computed bits will be at least $|x|$.   This is not the case if $1 < \beta < 2$, as shown by the following theorem whose proof can be found in $\cite{3}$:

\begin{theorem}
Let $\epsilon> 0$ and $x \in [-1,1]$.  Suppose that in the beta-encoding of $x$, the procedure $\eqref{iter}$ is followed, but the quantizer $Q^{\nu}(u)$ is used instead of the ideal $Q(u)$ at each occurence, with $\nu$ satisfying $\nu \leq  \epsilon$.  Denote by $(b_j)_{j\in N}$ the bit sequence produced by applying this encoding to the number $x$.  If $\beta$ satisfies
\begin{center}
$1 < \beta < \frac{2+\epsilon}{\epsilon+1},$
\end{center}
then
\begin{equation}
|x - \sum_{j=1}^N b_j \beta^{-j} | \leq C\beta^{-N}
\end{equation}
with $C = \epsilon + 1$.
\label{b-encod+} 
\end{theorem}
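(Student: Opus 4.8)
The plan is to track the internal state variables $u_j$ produced by the iteration \eqref{iter} and to show that they remain uniformly bounded, no matter how the flaky quantizer $Q^{\nu}$ resolves its ties in the undecided zone. First I would record the telescoping identity obtained by rearranging the recursion: from $u_1 = \beta x$ and $u_{j+1} = \beta(u_j - b_j)$ one gets $u_j = b_j + \beta^{-1}u_{j+1}$, and substituting this repeatedly for $j = 1,\dots,N$ gives
\[
x = \sum_{j=1}^{N} b_j \beta^{-j} + u_{N+1}\,\beta^{-(N+1)} .
\]
Thus the approximation error is \emph{exactly} $|u_{N+1}|\,\beta^{-(N+1)}$, and the whole theorem reduces to the single bound $|u_{N+1}| \le (\epsilon+1)\beta$.

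The main step would be to prove, by induction on $j$, that $|u_j| \le (\epsilon+1)\beta =: M$ for every $j \ge 1$. The base case is immediate, since $|u_1| = \beta|x| \le \beta \le M$. For the inductive step I would assume $|u_j| \le M$ and split into the three branches of $Q^{\nu}$ from \eqref{(fqs)}, using $0 \le \nu \le \epsilon$. If $u_j \ge \nu$, then $b_j = 1$ and $u_{j+1} = \beta(u_j - 1)$ lies in $[\beta(\nu-1), \beta(M-1)]$; here $\beta(M-1) \le M$ is equivalent to $(\epsilon+1)(\beta-1) \le 1$, i.e. to $\beta \le \frac{\epsilon+2}{\epsilon+1}$, which holds by hypothesis, while $|\beta(\nu-1)| \le \beta \le M$, so $|u_{j+1}| \le M$. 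The branch $u_j < -\nu$ is symmetric. In the remaining "flaky" branch $|u_j| \le \nu$, the bit $b_j$ may be assigned either sign, but then $|u_j - b_j| \le \nu + 1 \le \epsilon + 1$, so again $|u_{j+1}| = \beta|u_j - b_j| \le (\epsilon+1)\beta = M$. This closes the induction, and plugging $|u_{N+1}| \le (\epsilon+1)\beta$ into the telescoping identity yields $|x - \sum_{j=1}^{N} b_j\beta^{-j}| \le (\epsilon+1)\beta^{-N}$, which is the claim with $C = \epsilon + 1$.

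The step I expect to be the crux is the treatment of the flaky branch $|u_j| \le \nu$, since this is precisely where a PCM-type scheme breaks down: the error-amplification factor $|u_j - b_j|$ can be as large as $1 + \epsilon$ rather than at most $1$, so without some constraint on $\beta$ the bound on $|u_j|$ would grow over time. The hypothesis $\beta < \frac{2+\epsilon}{\epsilon+1}$ is exactly what guarantees that the subsequent "reliable" iterations $u \mapsto \beta(u \mp 1)$ contract (or at least do not expand) the state enough to keep it inside the box $[-M,M]$ with $M = (\epsilon+1)\beta$; verifying this compatibility between the chosen invariant $M$ and the bound on $\beta$ is the one place where the argument has to be carried out carefully rather than routinely. (This is essentially the argument given in \cite{3}.)
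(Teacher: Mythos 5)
Your proof is correct: the telescoped identity $x=\sum_{j=1}^{N}b_j\beta^{-j}+\beta^{-(N+1)}u_{N+1}$ plus the inductive invariant $|u_j|\le(\epsilon+1)\beta$, whose preservation in the reliable branches is exactly the condition $\beta\le\frac{2+\epsilon}{1+\epsilon}$ and in the flaky branch uses only $|u_j-b_j|\le 1+\epsilon$, yields $C=\epsilon+1$. The paper does not actually prove this theorem (it defers to \cite{3}), and your argument is precisely the standard state-boundedness proof from that reference, so there is no divergence of approach to report; the only cosmetic point is that the bound $|\beta(\nu-1)|\le\beta$ tacitly assumes $0\le\nu\le 2$, whereas the inequality you actually need, $u_{j+1}\ge -(\epsilon+1)\beta$, follows immediately from $u_j-1\ge\nu-1\ge -1$ once $\nu\ge 0$.
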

For a given tolerance $\epsilon > 0$, running the recursion $\eqref{iter}$ with a quantizer $Q^{\nu}(u)$ of tolerance $\epsilon$ and a value of $\beta$ in $(1,\frac{2+\epsilon}{\epsilon+1})$ produces bitstreams $(b_j)$ corresponding to a beta-expansion of the input $x$ in base $\beta$; however, the precise value of $\beta$ must be known in order recover $x$ from such a beta-expansion.  As mentioned in the introduction and detailed in section II, component imperfections may cause the circuit to behave as if a different value of $\beta$ is used, and this value will possibly be changing slowly over time within a known range, $[\beta_{low}, \beta_{high}]$.   In $\cite{2}$, a method is proposed whereby an exponentially precise approximation $\tilde{\gamma}$ to the value of $\gamma = \beta^{-1}$ at any given time can be encoded and transmitted to the decoder without actually physically measuring its value, via the encoded bitstreams of a real number $x \in [0,1)$ and $1-x$.  This decoding method can be repeated at regular time intervals during the quantization procedure, to account for the possible time-variance of $\gamma$.  That an exponentially precise approximation $\tilde{\gamma}$ to $\gamma$ is sufficient to reconstruct subsequent samples $f(t_n)$ with exponential precision is the content of the following theorem, which is essentially a restatement of Theorem 5 in $\cite{2}$. 

\begin{theorem}[Daubechies, Y$\i$lmaz]
Consider $x \in [0,1)$ and $(b_j)_{j \in N} \in \{0,1\}$, or $x \in [-1,1]$ and $(b_j)_{j \in N} \in \{-1,1\}$.  Suppose $\gamma \in (1/2,1)$ is such that $x = \sum_{j=1}^{\infty} b_j \gamma^j$.  Suppose $\tilde{\gamma}$ is such that $|\gamma - \tilde{\gamma}| \leq C_1 \gamma^N$ for some fixed $C_1 > 0$.    Then $\tilde{x}_N := \sum_{j=1}^N b_j \tilde{\gamma}^j$ satisfies
  \begin{equation}
  |x - \tilde{x}_N| \leq C_2 \gamma^N \end{equation} 
\label{daub}
where $C_2$ is a constant which depends only on $\gamma$ and $C_1$.
\end{theorem}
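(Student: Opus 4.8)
The plan is to bound $|x - \tilde x_N|$ by comparing $\tilde x_N$ to the exact truncation $x_N := \sum_{j=1}^N b_j \gamma^j$ and splitting the error into a tail part and a base-perturbation part:
\[
|x - \tilde x_N| \;\le\; \Big|\sum_{j=N+1}^{\infty} b_j \gamma^j\Big| \;+\; \Big|\sum_{j=1}^{N} b_j\big(\gamma^j - \tilde\gamma^j\big)\Big| \;=:\; T_N + P_N .
\]
In each case of the hypothesis we have $|b_j| \le 1$ and $|x| \le 1$, so the signs of the $b_j$ can be discarded throughout. First I would handle the tail: since $\gamma \in (1/2,1)$, summing the geometric series gives $T_N \le \sum_{j=N+1}^\infty \gamma^j = \gamma^{N+1}/(1-\gamma) \le \frac{\gamma}{1-\gamma}\,\gamma^N$, which is already of the required form.

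The substantive step is the perturbation term $P_N$. Here I would use the elementary factorization $\gamma^j - \tilde\gamma^j = (\gamma - \tilde\gamma)\sum_{i=0}^{j-1}\gamma^{\,j-1-i}\tilde\gamma^{\,i}$ (equivalently, the mean value theorem applied to $t \mapsto t^j$), which yields $|\gamma^j - \tilde\gamma^j| \le j\,r^{\,j-1}\,|\gamma - \tilde\gamma|$ with $r := \max(\gamma,|\tilde\gamma|)$. Consequently
\[
P_N \;\le\; |\gamma - \tilde\gamma|\sum_{j=1}^N j\,r^{\,j-1} \;\le\; C_1\gamma^N \sum_{j=1}^{\infty} j\,r^{\,j-1} \;=\; \frac{C_1\gamma^N}{(1-r)^2},
\]
valid as soon as $r < 1$.

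Guaranteeing $r < 1$ is the one point that needs care, since $\tilde\gamma$ is only assumed to lie within $C_1\gamma^N$ of $\gamma$ and is not a priori below $1$. The resolution is to split on the size of $N$: choose $N_0$, depending only on $\gamma$ and $C_1$, so that $C_1\gamma^{N_0} \le (1-\gamma)/2$; then for every $N \ge N_0$ we get $0 < \tilde\gamma \le (1+\gamma)/2$, hence $r \le (1+\gamma)/2$ and $(1-r)^{-2} \le 4/(1-\gamma)^2$, so that
\[
|x - \tilde x_N| \;\le\; \Big(\frac{\gamma}{1-\gamma} + \frac{4C_1}{(1-\gamma)^2}\Big)\gamma^N .
\]
For the finitely many indices $N < N_0$ I would simply bound $|x| \le 1$ and $|\tilde x_N| \le \sum_{j=1}^{N_0}(\gamma + C_1)^j$ directly, obtaining a finite bound depending only on $\gamma$ and $C_1$, and absorb it into the constant by dividing through by $\gamma^{N_0} \le \gamma^N$. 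Taking $C_2$ to be the larger of the two resulting constants proves the claim.

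I expect the main obstacle to be exactly this perturbation estimate: one must extract the clean factor $\gamma^N$ (rather than a spurious $N\gamma^N$ or $\gamma^{N_0}$) from $\sum_j |\gamma^j - \tilde\gamma^j|$, and controlling the series $\sum_j j\,r^{\,j-1}$ is what forces the case distinction on whether $N$ is large enough that $\tilde\gamma < 1$. The tail estimate and the bookkeeping for $C_2$ are routine once this is in place.
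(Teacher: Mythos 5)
Your argument is correct. Note, however, that the paper itself does not prove this statement: it is quoted as a restatement of Theorem 5 of the cited Daubechies--Y\i lmaz paper, so there is no in-paper proof to compare against. Your decomposition into the geometric tail $\bigl|\sum_{j>N} b_j\gamma^j\bigr|$ plus the base-perturbation term, with the latter controlled by $|\gamma^j-\tilde\gamma^j|\le j\,r^{j-1}|\gamma-\tilde\gamma|$ and the summable series $\sum_j j\,r^{j-1}=(1-r)^{-2}$, is essentially the standard argument from that reference, and you correctly handle the only delicate point (forcing $\tilde\gamma<1$ by restricting to $N\ge N_0$ with $C_1\gamma^{N_0}\le(1-\gamma)/2$, and absorbing the finitely many small $N$ into the constant $C_2$).
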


Although the approach proposed in $\cite{2}$ for estimating $\beta$ from bitstreams overcomes potential approximation error caused by imprecise multipliers in the circuit implementation of the beta-encoder, new robustness issues are nevertheless introduced. Typically, one cannot ensure that the reference level 1 in $1-x$ is known with high precision.  To circumvent this problem, the authors consider other heuristics whereby $\beta$ is recovered from clever averaging of multiple pairs $x_j$ and $1-x_j$.  These heuristics do not require that the reference level 1 in $1-x$ be precise; however, these approaches become quite complicated in and of themselves, and any sort of analytical analysis of their performance becomes quite difficult.  In section III, we will present a similar approach for recovering $\beta$ that does not require a precise reference level, yet still allows for exponentially precise approximations to $\beta$.

\subsection{The golden ratio encoder}
 As shown in the previous section, beta-encoders are robust with respect to imperfect quantizer elements, and their approximation error decays exponentially with the number of bits $N$.   To attain this exponential precision, $\beta$ must be measured with high precision, which is quite complicated in practice.   These complications motivated the invention of the golden ratio encoder (GRE) of $\cite{5}$, which has the same robustness and rate-distortion properties as beta-encoders, but uses an additional delay element in place of precise multiplier in its implementation.  More precisely, if one implements the recursion $\eqref{iter}$ with $\beta = \phi = \frac{1 + \sqrt{5}}{2}$, then using the relation $\phi^2 = \phi + 1$, one obtains the recursion formula $u_{n+1} = u_{n-1} + u_n - (b_{n-1} + \phi b_n)$.  If the term $b_{n-1} + \phi b_n$ in this formula is removed, then the resulting recursion $v_{n+1} = v_n + v_{n-1}$ should look familiar; indeed, with initial conditions $(v_0,v_1) = (0,1)$, this recursion generates the Fibonacci numbers $v_n$, and it is well-known that the sequence $\frac{v_{n+1}}{v_n} \rightarrow \phi$ as $n \rightarrow \infty$.   If $b_{n-1} + \phi b_n$ is instead replaced by a single bit taking values in $\{-1,1\}$, then we are led to the following scheme:
\begin{eqnarray}
            u_0 &=& 0    \nonumber \\
            u_1 &=& x   \nonumber \\
            b_1 &=& Q(0,x)      \nonumber \\	
\textrm{for $n \geq 1:$ } u_{n+1} &=& u_{n} + u_{n-1} - b_{n} \nonumber \\
	  b_{n+1} &=& Q(u_n, u_{n+1})  
	  \label{(simple recurs)} 
        \end{eqnarray} 
In this paper, we will consider quantizers $Q(u,v)$ in $\eqref{(simple recurs)}$ of the form $Q_{\alpha}(u,v)$, where
\begin{equation}
Q_{\alpha}(u,v)  = \left\{\begin{array}{cl}
	-1,  & u + \alpha v  <  0\\
	1,   &u + \alpha v \geq 0
	   \end{array}\right.
           \label{(fq2)}
\end{equation}
along with their flaky analogs,
\begin{equation}
Q_{\alpha}^{\nu}(u,v)  = \left\{\begin{array}{cl}
	-1,  & u + \alpha v  <  -\nu \\
	1,   &u + \alpha v \geq \nu \\
	-1 \textrm{ or } 1, &-\nu \leq u + \alpha v \leq \nu
	   \end{array}\right.
           \label{(fq2)}
\end{equation}
In $\cite{5}$, the authors consider the recursion formula $\eqref{(simple recurs)}$ implemented with flaky $\{0,1\}$-quantizers  of the form $\bar{Q}_{\alpha}^{\nu,\iota}(u,v) = \Big[ Q^{\nu}(u + \alpha v - \iota) + 1 \Big]/2$.  For the simplicity of presentation, we will consider only the $\{-1,1\}$-quantizers $\eqref{(fq2)}$, but many of our results extend straightforwardly to quantizers of the type $\bar{Q}_{\alpha}^{\nu,\iota}(u,v)$. 
\\
\\
The following theorem was proved in $\cite{5}$; it shows that as long as $x$ and $Q(u,v)$ are such that the state sequence $u = \{u_n\}_{n=0}^{\infty}$ remains bounded, a golden ratio encoder (corresponding to the recursion $\eqref{(simple recurs)}$) will produce a bitstream $(b_j)$ corresponding to a beta-expansion of $x$ in base $\beta = \phi$, just as does the beta-encoder from which the GRE was derived.  

\begin{theorem}[Daubechies, G{\"u}nt{\"u}rk, Wang, Y$\i$lmaz]
Consider the recursion $\eqref{(simple recurs)}$.  Suppose the 1-bit quantizer $Q(u,v)$ that outputs bits $(b_j)$ in $\{-1,1\}$ is of the type $Q_{\alpha}^{\nu}(u,v)$ such that the state sequence $u = \{u_n\}_{n=0}^{\infty}$ with $u_0 = 0$ and $u_1 = x$ is bounded.   Then
\begin{equation}
 |x - \sum_{n=1}^{N} b_n {\phi}^{-n} | \leq \phi^{-N+1}
\label{expand}
\end{equation}
Here $\phi$ is the golden ratio.
\end{theorem}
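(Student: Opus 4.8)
The plan is to show that the state recursion $u_{n+1} = u_n + u_{n-1} - b_n$ encodes a telescoping identity connecting $x$ to the partial sums $\sum_{n=1}^N b_n \phi^{-n}$, with the error controlled entirely by the boundedness of the state sequence. First I would look for the right linear combination of $u_{n}$ and $u_{n+1}$ that behaves multiplicatively under the recursion. The natural candidate is $w_n := u_{n+1} + (\phi - 1) u_n = u_{n+1} + \phi^{-1} u_n$ (using $\phi - 1 = \phi^{-1}$); I would compute $w_n$ in terms of $w_{n-1}$ using $u_{n+1} = u_n + u_{n-1} - b_n$ and the identity $\phi^2 = \phi + 1$, expecting to get a clean relation of the form $w_n = \phi^{-1} w_{n-1} - (\text{something involving } b_n)$, or equivalently $\phi w_n = w_{n-1} - \phi b_n$ or similar.

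Once that one-step relation is in hand, the second step is to iterate it from $n = N$ down to $n = 1$ (or up from $n=1$), collecting the $b_n$ terms with the appropriate powers $\phi^{-n}$ and tracking the single leftover boundary term $w_N$ (respectively $w_0$) multiplied by $\phi^{-N}$. Using the initial conditions $u_0 = 0$, $u_1 = x$, the base term $w_0 = u_1 + \phi^{-1} u_0 = x$ should reproduce $x$ exactly, so that the identity reads
\begin{equation}
x = \sum_{n=1}^{N} b_n \phi^{-n} + \phi^{-N} w_N
\end{equation}
up to constant factors I would pin down in the computation. Then $\bigl| x - \sum_{n=1}^N b_n \phi^{-n} \bigr| = \phi^{-N} |w_N| \le \phi^{-N} \bigl( |u_{N+1}| + \phi^{-1} |u_N| \bigr)$.

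The final step is to bound $|w_N|$ by a constant independent of $N$, which is exactly where the hypothesis that the state sequence $u = \{u_n\}$ is bounded enters. I would need to check that the uniform bound on $\|u\|_\infty$ forces $|w_N| \le \phi$ (or whatever constant makes the stated inequality $\phi^{-N+1}$ come out), presumably by using the specific bound on $\|u\|_\infty$ that the quantizer $Q_\alpha^\nu$ guarantees — this is the quantitative content that was established in \cite{5} and that I would cite or re-derive as needed. The main obstacle I anticipate is not the telescoping algebra, which is routine once the right auxiliary variable $w_n$ is chosen, but rather getting the constant sharp: one must track carefully how the invariant-region bound on $u_n$ (which depends on $\alpha$ and $\nu$) propagates to $w_N$, and verify it is small enough to yield precisely $\phi^{-N+1}$ rather than a larger constant times $\phi^{-N}$. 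If the sharp constant proves delicate, a fallback is to prove the weaker bound $C\phi^{-N}$ with an explicit but non-optimal $C$, which already suffices for the exponential rate-distortion conclusion that the rest of the paper needs.
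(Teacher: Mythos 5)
Your telescoping identity is correct, and it is in fact the same identity the paper derives by shifting indices: with $w_n = u_{n+1} + \phi^{-1}u_n$ the recursion gives $w_n = \phi w_{n-1} - b_n$ and $w_0 = u_1 + \phi^{-1}u_0 = x$, hence $x = \sum_{n=1}^{N} b_n \phi^{-n} + \phi^{-N} w_N$; the paper's boundary term $u_N(1-\phi) - u_{N+1}$ is exactly $-w_N$. The gap is in your final step. The hypothesis of the theorem says only that the state sequence is bounded, with no specified bound, so you cannot extract $|w_N| \le \phi$ (which is what your route needs to produce the stated constant) from the hypothesis itself; and importing the quantitative invariant-region bound from \cite{5} both goes beyond the hypothesis and, in general, gives a bound on $|u_n|$ that is not $\le \phi$, so even then you would land on your fallback $C\phi^{-N}$ with $C$ depending on $\alpha$, $\nu$ and the invariant region, not on the clean $\phi^{-N+1}$ claimed.

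The sharp constant comes from a different and simpler observation: use the boundedness of $(u_n)$ only qualitatively, to conclude that $\phi^{-N} w_N \to 0$ and hence that the identity passes to the limit, giving the exact expansion $x = \sum_{n=1}^{\infty} b_n \phi^{-n}$. The approximation error is then the tail of this series, and since $|b_n| = 1$ for every $n$,
\begin{equation}
\Bigl| x - \sum_{n=1}^{N} b_n \phi^{-n} \Bigr| \;=\; \Bigl| \sum_{n=N+1}^{\infty} b_n \phi^{-n} \Bigr| \;\le\; \frac{\phi^{-(N+1)}}{1-\phi^{-1}} \;=\; \phi^{-N+1},
\end{equation}
using $1 - \phi^{-1} = \phi^{-2}$. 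No quantitative control of $u_N$, $\alpha$, or $\nu$ is needed, and the constant is exactly the one stated; your estimate $\phi^{-N}\bigl(|u_{N+1}| + \phi^{-1}|u_N|\bigr)$ should be used only to justify the passage to the limit, not to produce the final bound.
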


\begin{proof} 
Note that
\begin{eqnarray}
\sum_{n=1}^{N} b_{n} \phi^{-n} &=& \sum_{n=1}^{N} (u_{n-1} + u_{n} - u_{n+1}) \phi^{-n} \nonumber \\
&=& \sum_{n=0}^{N-1} u_n \phi ^{-(n+1)} +  \sum_{n=1}^{N} u_n \phi^{-n}  -\sum_{n=2}^{N+1} u_n \phi ^{-(n-1)} \nonumber \\
\nonumber \\
&=& u_0 \phi^{-1} + (\phi^{-1} + 1  - \phi) \sum_{n=2}^{N-1} \phi^{-n} u_n  \nonumber \\  
&&+ u_1(\phi^{-1}  +  \phi^{-2}) + \phi^{-N} \Big(u_{N} (1 - \phi) - u_{N+1} \Big) \nonumber \\
&=& u_0 \phi^{-1} + u_1 + \phi^{-N} \Big(u_{N} (1 - \phi) - u_{N+1} \Big) \nonumber \\
&=& x  +\phi^{-N}\Big( u_{N} (1 - \phi) - u_{N+1} \Big).  \nonumber
\end{eqnarray}
The second to last equality uses the relation $\phi^{-2} + \phi^{-1} - 1 = 0$, and the last equality is obtained by setting $u_0 = 0$ and $u_1 = x$.  Since the state sequence $u = \{u_n\}_{n=0}^{\infty}$ is bounded, it follows that $x =  \sum_{n=1}^{\infty} b_n \phi^{-n}$ .   Thus,
\begin{eqnarray}
 |x - \sum_{n=1}^{N} b_n {\phi}^{-n} | &=&  |\sum_{n=N+1}^{\infty} b_n \phi^{-n} | \nonumber \\
 &\leq& \frac{\phi^{-(N+1)}}{1-\phi^{-1}} \nonumber \\
 &=& \phi^{-N+1}.
 \end{eqnarray}
\begin{flushright}
$\blacksquare$
 \end{flushright}
 \end{proof} 
Figure 1 compares the block-diagrams of a beta-encoder with error correction, corresponding to the recursion formula $\eqref{iter}$, and a golden ratio encoder corresponding to the recursion $\eqref{(simple recurs)}$.  Although the implementation of GRE requires 2 more adders and one more delay element than the implementation of the beta-encoder, the multiplier element $\alpha$ in GRE does not have to be precise (see section 6), whereas imprecisions in the multiplier element $\beta$ of the beta-encoder result in beta-expansions of the input $x$ in a different base $\beta'$.  

\begin{figure*}
\centerline{\subfigure[Beta-encoder]{\includegraphics[width=2.5in]{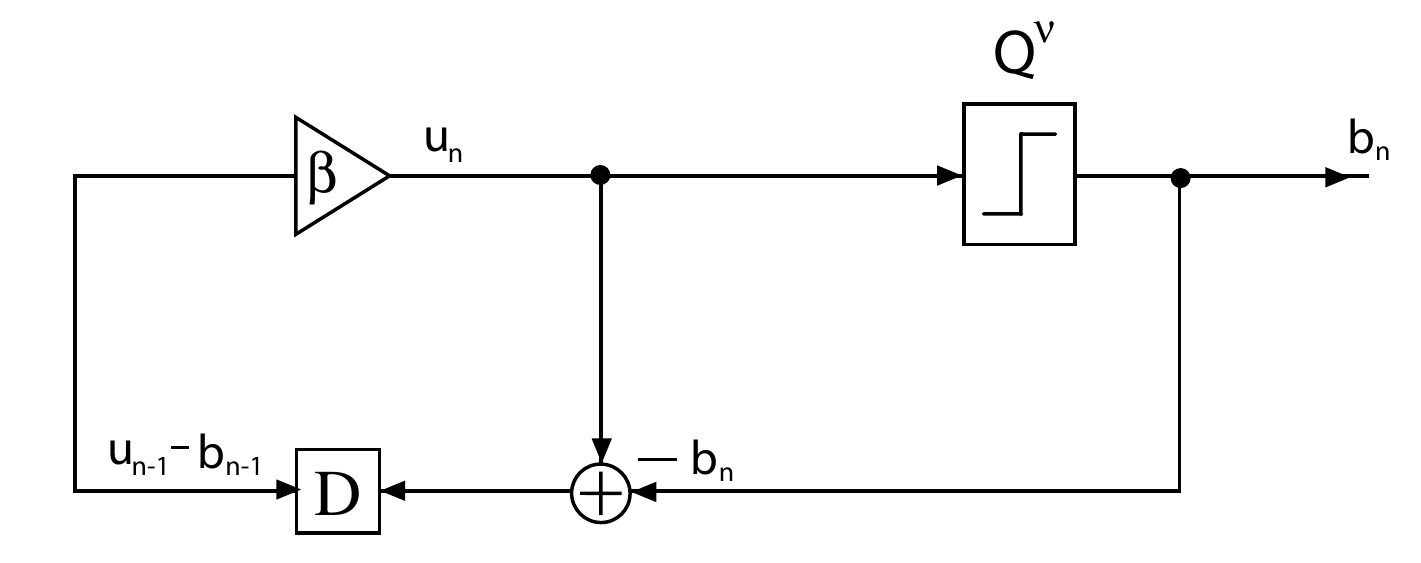}
\hfil
\label{one}}
\subfigure[GRE-encoder]{\includegraphics[width=2.5in]{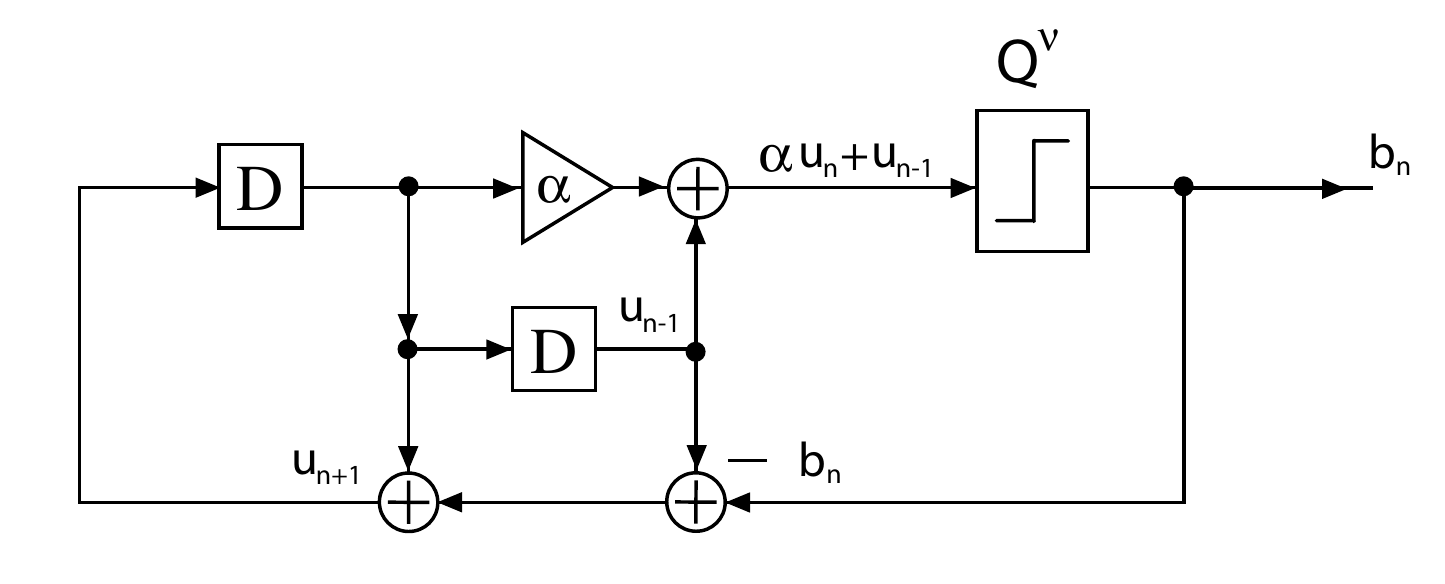}
\label{2}}}
\caption{ {\small A comparison of the block diagrams for (a) a beta-encoder with error correction, and a (b) golden ratio encoder.  Here $D$ represents a delay term, which holds input throughout one timestep. }}
\label{figure 1}
\end{figure*}
  
\section{GRE: The Revised Scheme}
In modeling the golden ratio encoder by the system $\eqref{(simple recurs)}$, we are assuming that the delay elements used in its circuit implementation are ideal.  A more realistic model would take into account the effect of integrator leak, which is inevitable in any practical circuit implementation (see $\cite{9}$ for more details).  After one clock time, the stored input in the first delay is reduced to $\lambda_1$ times its original value, while the stored input in the second delay is replaced by $\lambda_2$ times its original value.  In virtually all circuits of interest, no more than $10$ percent of the stored input is leaked at each timestep; that is,  we can safely assume that $\lambda_1$ and $\lambda_2$ are parameters in the interval [.9,1].  The precise values of these parameters may change in time; however, as virtually all practical A/D converters produce over 1000 bits per second (and some can produce over 1 billion bits per second),  we may safely assume that $\lambda_1$ and $\lambda_2$ are constant throughout the quantization of at least every 10 samples.  
\\
\\
Fixing an input value $x \in [-1,1]$, we arrive at the following revised description of the GRE encoder: 
\begin{eqnarray}
           u_0 &=& 0    \nonumber \\
           u_1 &=& x   \nonumber \\
           b_1 &=& Q(0,x)      \nonumber \\		
        \textrm{for $n \geq 1:$ }   u_{n+1} &=& \lambda_1 u_{n} +\lambda_1 \lambda_2 u_{n-1} - b_{n} \nonumber \\
	  b_{n+1} &=& Q(u_{n}, u_{n-1}) 
	  \label{(lambda recurs)}
\end{eqnarray}	
Obviously, $\lambda_1 = \lambda_2 = 1$ corresponds to the original model $\eqref{(simple recurs)}$.  It is reasonable to assume in practice that $(\lambda_1,\lambda_2) \in M := [.95,1]^2$, and in virtually all cases $(\lambda_1,\lambda_2) \in V := [.9,1]^2$.
\\
\\
We will show that the revised scheme $\eqref{(lambda recurs)}$ still produces beta-expansions of the input $x$, but in a slightly different base $\gamma = \beta^{-1} = \frac{-\lambda_1 + \sqrt{\lambda_1^2 + 4 \lambda_1 \lambda_2}}{2 \lambda_1 \lambda_2}$, which increases away from $\phi^{-1}$ as the parameters $\lambda_1$ and $\lambda_2$ decrease.   Key in the proof of Theorem $\eqref{expand}$ was the use of the relation $1 - \phi^{-1} - \phi^{-2} = 0$ to reduce $\sum_{n=1}^N (u_n + u_{n+1} - u_{n+2}) \phi^{-n}$ to the sum of the input $x$, and a remainder term that becomes arbitrarily small with increasing $N$.   Accordingly, the relation $1  - \lambda_1 \gamma - \lambda_1 \lambda_2 \gamma^2 = 0$ gives $\sum_{n=1}^N (u_n + u_{n+1} - u_{n+2}) \gamma^n = x + R(N)$, where $R(N)$ goes to $0$ as $N$ goes to infinity.   
\begin{theorem}    Suppose the 1-bit quantizer $Q$ in $\eqref{(lambda recurs)}$ of type $\eqref{(fq2)}$ is such that the state sequence $u = \{u_n\}_{n=0}^{\infty}$ with $u_0 = 0$ and $u_1 = x$ is bounded.  Consider $\gamma = \frac{-\lambda_1 + \sqrt{\lambda_1^2 + 4 \lambda_1 \lambda_2}}{2 \lambda_1 \lambda_2}$. Then
\begin{center}
$|x - \sum_{n=1}^{N} b_n \gamma^n | \leq C_{\gamma} \gamma^{N} $
 \end{center}
 where $C_{\gamma} = \frac{\gamma}{1 - \gamma}$.
\label{exp}
\end{theorem}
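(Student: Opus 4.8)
The plan is to mimic closely the proof of the preceding theorem on golden ratio expansions, with the single quantity $\phi^{-1}$ replaced throughout by $\gamma$. The first step is to record the algebraic fact that $\gamma = \frac{-\lambda_1+\sqrt{\lambda_1^2+4\lambda_1\lambda_2}}{2\lambda_1\lambda_2}$ is exactly the positive root of the quadratic $\lambda_1\lambda_2 t^2 + \lambda_1 t - 1 = 0$; equivalently, $\gamma$ satisfies the characteristic relation $1 - \lambda_1\gamma - \lambda_1\lambda_2\gamma^2 = 0$, which is the analogue here of $1 - \phi^{-1} - \phi^{-2} = 0$. I would also note at this point that in the parameter range of interest one has $\gamma \in (0,1)$: since $t \mapsto \lambda_1\lambda_2 t^2 + \lambda_1 t - 1$ is increasing for $t>0$, the root $\gamma$ lies below $1$ precisely when $\lambda_1(1+\lambda_2) > 1$, which holds comfortably whenever $\lambda_1,\lambda_2 \in [.9,1]$. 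This is the place where the physically reasonable assumption that the leakage parameters are near $1$ is actually needed, and it is what makes the geometric series in $\gamma$ below converge.

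Next I would rewrite the recursion $\eqref{(lambda recurs)}$ as $b_n = \lambda_1 u_n + \lambda_1\lambda_2 u_{n-1} - u_{n+1}$, valid for all $n\ge 1$, and substitute this into the partial sum $\sum_{n=1}^N b_n \gamma^n$. This yields three sums; after reindexing so that each is expressed in terms of $u_n\gamma^n$, they combine over the interior range of indices with a common coefficient proportional to $\lambda_1\gamma + \lambda_1\lambda_2\gamma^2 - 1$, which vanishes identically by the characteristic relation, just as the factor $\phi^{-1}+1-\phi$ vanished in the earlier proof. What survives is a small number of boundary terms at $n=0,1$ and at $n=N,N+1$. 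The terms at $n=0,1$ collapse to $\lambda_1\lambda_2\gamma\,u_0 + u_1(\lambda_1\gamma + \lambda_1\lambda_2\gamma^2) = 0 + x = x$ after inserting $u_0 = 0$, $u_1 = x$ and using the characteristic relation once more, while the terms at $n=N,N+1$ assemble into a remainder $R(N)$ that is a fixed linear combination of $u_N$ and $u_{N+1}$ multiplied by $\gamma^{N}$ (up to a harmless factor of $\gamma^{\pm1}$). Thus $\sum_{n=1}^N b_n\gamma^n = x + R(N)$.

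Since the state sequence $\{u_n\}_{n=0}^\infty$ is bounded by hypothesis and $\gamma<1$, the remainder satisfies $R(N)\to 0$ as $N\to\infty$, so that $x = \sum_{n=1}^\infty b_n\gamma^n$. The claimed estimate then follows from the crude bound $|x - \sum_{n=1}^N b_n\gamma^n| = |\sum_{n=N+1}^\infty b_n\gamma^n| \le \sum_{n=N+1}^\infty \gamma^n = \frac{\gamma^{N+1}}{1-\gamma} = C_\gamma\,\gamma^N$, using $|b_n| = 1$, with $C_\gamma = \gamma/(1-\gamma)$ exactly as stated.

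I expect the only genuine hazard to be the index bookkeeping in the second step — getting the three reindexed sums lined up and the boundary terms at $n=0,1,N,N+1$ collected correctly. Beyond that the argument is entirely routine: the cancellation is forced by the characteristic relation $1 - \lambda_1\gamma - \lambda_1\lambda_2\gamma^2 = 0$, the boundedness hypothesis supplies all the analysis, and the final bound is nothing more than a geometric tail estimate.
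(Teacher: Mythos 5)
Your proposal is correct and follows essentially the same route as the paper: use the characteristic relation $1-\lambda_1\gamma-\lambda_1\lambda_2\gamma^2=0$ to collapse the reindexed sums to $x$ plus a boundary remainder of size $O(\gamma^N)$ (the paper writes the vanishing interior coefficient as $\lambda_1\lambda_2\gamma+\lambda_1-\gamma^{-1}$, which is your expression divided by $\gamma$), then invoke boundedness of $\{u_n\}$ and the geometric tail bound with $|b_n|=1$. Your explicit check that $\gamma\in(0,1)$ when $\lambda_1(1+\lambda_2)>1$ is a small addition the paper leaves implicit, but the argument is otherwise the same.
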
	

\begin{proof}
\begin{eqnarray}
\sum_{n=1}^{N} b_{n} \gamma^{n} &=& \sum_{n=1}^{N} ( \lambda_1 \lambda_2 u_{n-1} + \lambda_1 u_{n} - u_{n+1}) \gamma ^{n} \nonumber \\
&=&  (\lambda_1 \lambda_2 \gamma + \lambda_1  - \gamma^{-1}) \sum_{n=2}^{N-1} \gamma^n u_n \nonumber \\
&  & + \lambda_1 \lambda_2 u_0 \gamma + u_1(\lambda_1 \gamma  + \lambda_1 \lambda_2 \gamma^2) \nonumber \\
&  & + \gamma^{N} \Big(u_{N} (\lambda_1 - \gamma^{-1}) - u_{N+1} \Big) \nonumber \\
 \nonumber \\
&=&  x  +\gamma^{N}\Big( u_{N} (\lambda_1 - \gamma^{-1}) - u_{N+1} \Big).
\end{eqnarray}
The last equality is obtained by setting $u_0 = 0$ and $u_1 = x$. 
Since the $u_n$ are bounded, it follows as in the proof of $\eqref{(simple recurs)}$ that 
\begin{eqnarray}
 |x - \sum_{n=1}^{N} b_n \gamma^n | &\leq& \frac{\gamma^{N+1}}{1 - \gamma}.
\end{eqnarray}
\begin{flushright}
$\blacksquare$
 \end{flushright} \end{proof}

Theorem $\eqref{exp}$ implies that if $\gamma$ is known, then the revised GRE scheme $\eqref{(lambda recurs)}$ still gives exponential approximations to the input signal $x$, provided that the $u_n$ are indeed bounded.   The following theorem gives an explicit range for the parameters $(\nu, \alpha)$ which results in bounded sequences $u_n$ when the input $x \in [-1,1]$, independent of the values of the leakage parameters $(\lambda_1,\lambda_2)$ in the set $V = [.9,1]^2$.  This parameter range is only slightly more restrictive than that derived in $\cite{5}$ for the ideal GRE scheme $\eqref{(simple recurs)}$; that is, the admissable parameter range for $\alpha$ and $\nu$ is essentially robust with respect to leakage in the delay elements of the GRE circuit implementation.  
\begin{theorem}
Let $x \in [-1,1]$, and $(\lambda_1, \lambda_2) \in [.9, 1]^2$.  Suppose that the GRE scheme $\eqref{(lambda recurs)}$ is followed, and the quantizer $Q_{\alpha}^{\nu}(u,v)$ is used, with $\nu$ possibly varying at each occurrence, but always satisfying $\nu \leq \epsilon$ for some fixed tolerance $\epsilon \leq .5$.    If the parameter $\alpha$ takes values in the interval $[1.198+1.198 \epsilon, 2.281 - .952 \epsilon]$, then the resulting state sequence $(u_j)_{j \in N}$ is bounded.
\label{range}
\end{theorem}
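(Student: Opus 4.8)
The plan is to prove boundedness by exhibiting an explicit bounded forward-invariant region for the orbit of the piecewise-affine map defined by \eqref{(lambda recurs)}. In vector form, with $s_n := (u_{n-1},u_n)^{T}$ the recursion reads $s_{n+1} = A\,s_n - b_n e_2$, where $A = \left(\begin{smallmatrix} 0 & 1 \\ \lambda_1\lambda_2 & \lambda_1\end{smallmatrix}\right)$, $e_2 = (0,1)^{T}$, and $b_n \in \{-1,1\}$ is whatever bit the flaky quantizer $Q_\alpha^\nu$ emits from the state available at that step. The characteristic polynomial $t^{2} - \lambda_1 t - \lambda_1\lambda_2$ has the expanding root $\beta = \gamma^{-1} > 1$ (here $\gamma$ is the base of Theorem~\ref{exp}) and the contracting root $-\mu$ with $\mu = \lambda_1\lambda_2\gamma \in (0,1)$; one records as well $\Delta := \beta+\mu = \sqrt{\lambda_1^{2}+4\lambda_1\lambda_2}$ and the identities $\lambda_1\lambda_2 = \mu\beta$, $\beta - \lambda_1 = \mu$. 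All of these are continuous in $(\lambda_1,\lambda_2)$ on the compact square $V = [.9,1]^{2}$, so every bound below may be taken uniform over $V$; in particular $1 < \beta \le \phi < 2$ and $0 < \mu < 1$.

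The first step is to decouple the two eigendirections. Setting $w_n := u_{n+1} + \mu u_n$ and $y_n := u_{n+1} - \beta u_n$, a one-line substitution using the identities above gives the scalar recursions $w_n = \beta\,w_{n-1} - b_n$ and $y_n = -\mu\,y_{n-1} - b_n$, together with $u_n = (w_n - y_n)/\Delta$, so bounding $(w_n)$ and $(y_n)$ bounds $(u_n)$. The $y$-coordinate is harmless: since $|\mu| < 1$ and $|b_n| = 1$, the interval $\big[-\tfrac{1}{1-\mu},\tfrac{1}{1-\mu}\big]$ is invariant under $y \mapsto -\mu y - b$ for either bit, and the seed $y_0 = u_1 - \beta u_0 = x$ lies in it since $|x| \le 1$. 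Likewise $w_0 = x$; and since the constraint $\alpha \ge 1.198(1+\epsilon)$ forces $b_1$ to be flaky only when $|x|$ is small, one checks $|w_1| = |\beta x - b_1| < \tfrac{1}{\beta-1}$, so the orbit enters $I_\beta := \big[-\tfrac{1}{\beta-1},\tfrac{1}{\beta-1}\big]$ already at the first step. The whole burden is therefore to keep the expanding coordinate $w_n$ inside $I_\beta$.

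This is the crux. If the quantizer reliably output $b_n = \operatorname{sign}(w_{n-1})$, then $I_\beta$ would be invariant: its endpoint $\tfrac{1}{\beta-1}$ is the fixed point of $w \mapsto \beta w - 1$, while for $w_{n-1}$ of small modulus either bit keeps $w_n$ in $I_\beta$. Since $w \mapsto \beta w - 1$ repels near $\tfrac{1}{\beta-1}$, the quantization must do two things: (a) when $w_{n-1}$ is near $\pm\tfrac{1}{\beta-1}$ the quantizer must lie outside its flaky band, hence be forced to emit the sign-reducing bit; and (b) the set of $w$-values on which the quantizer \emph{may} emit the wrong bit must be small enough that even a wrong bit keeps $w_n$ in $I_\beta$, i.e. contained in $\big[-\tfrac{2-\beta}{\beta(\beta-1)},\tfrac{2-\beta}{\beta(\beta-1)}\big]$. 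The quantizer senses a fixed linear functional $\ell$ of the current state; writing $\ell = c_w\,w + c_y\,y$ with $c_w,c_y$ explicit in $(\alpha,\lambda_1,\lambda_2)$ (the admissible range in particular makes $c_w > 0$, without which $w$ diverges), and using $|y| \le \tfrac{1}{1-\mu}$, condition (a) reads $c_w\,\tfrac{1}{\beta-1} - |c_y|\,\tfrac{1}{1-\mu} \ge \epsilon$ and condition (b) reads $\big(\epsilon + |c_y|\,\tfrac{1}{1-\mu}\big)/c_w \le \tfrac{2-\beta}{\beta(\beta-1)}$, both required for all $(\lambda_1,\lambda_2)\in V$ and all $\nu \le \epsilon \le .5$. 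Each is an inequality in the single variable $\alpha$, and together they cut out a closed interval $[\alpha_{\min}(\lambda_1,\lambda_2;\epsilon),\,\alpha_{\max}(\lambda_1,\lambda_2;\epsilon)]$; intersecting over $V$ (the extrema are attained at its corners) produces exactly $[1.198+1.198\,\epsilon,\; 2.281-.952\,\epsilon]$. Taking $\Omega$ to be the bounded parallelogram in the $(u_{n-1},u_n)$-plane corresponding to $I_\beta \times \big[-\tfrac{1}{1-\mu},\tfrac{1}{1-\mu}\big]$ in the $(w,y)$-coordinates, one verifies $(u_0,u_1) = (0,x) \in \Omega$ and that $\Omega$ is forward-invariant, whence $(u_n)$ is bounded.

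The main obstacle is precisely this quantitative step: converting "$Q_\alpha^\nu$ is forced to emit the stabilizing bit at the boundary of $\Omega$, uniformly in $(\lambda_1,\lambda_2)\in V$ and for flaky tolerance up to $.5$" into the explicit two-sided interval for $\alpha$, and in particular confirming that at the worst corner of $V$ with $\epsilon = .5$ one still has $\alpha_{\min} < \alpha_{\max}$ — i.e. that the constants $1.198$, $2.281$, $.952$ are mutually consistent. Two features make this delicate rather than automatic. First, by \eqref{(lambda recurs)} the quantizer decides a bit from a state one step earlier than the one it acts on, so either the lag is tracked by enlarging $\Omega$ to carry the pending bit, or one shows that the sensed functional at a step already predicts $\operatorname{sign}(w)$ at the step where the bit is used. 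Second, $\alpha$ is a single constant while $\beta,\mu,\Delta$ sweep a two-parameter family, so the quantizer's decision line rotates relative to $\Omega$ as $(\lambda_1,\lambda_2)$ varies, and one must certify that no admissible leakage value tips a boundary decision the wrong way. Everything else — the eigen-decoupling, the $y$-estimate, invariance of $I_\beta$ under the ideal bit rule, and membership of the initial state — is routine.
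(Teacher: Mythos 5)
Your overall strategy is the same one the paper uses: view \eqref{(lambda recurs)} as a piecewise-affine map with matrix $\left(\begin{smallmatrix}0&1\\ \lambda_1\lambda_2&\lambda_1\end{smallmatrix}\right)$, build a bounded positively invariant region aligned with its eigendirections, and insist that the quantizer's flaky strip $\{|u+\alpha v|\le\nu\}$ meet that region only where \emph{either} bit keeps the state inside; the paper's rectangle $R=A_1^{\#}B_2C_2^{\#}D_1$ in the $(\Phi_1,\Phi_2)$ frame is exactly your product region in eigencoordinates, and the paper's requirement $R\cap F\subset H$ is your condition that a wrong bit be harmless wherever it can occur. So the decoupling into $w_n=\beta w_{n-1}-b_n$ and $y_n=-\mu y_{n-1}-b_n$ is a legitimate (and clean) reformulation, not a different method.

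The genuine gap is that the statement you are asked to prove is quantitative, and the quantitative content is precisely what you omit. The theorem asserts the specific admissible interval $[1.198+1.198\epsilon,\,2.281-.952\epsilon]$, uniformly over $(\lambda_1,\lambda_2)\in[.9,1]^2$ and for all $\epsilon\le .5$; you write down two inequalities in $\alpha$ and then simply assert that intersecting them over $V$ ``produces exactly'' this interval, and you yourself flag the verification (including whether the interval is nonempty at the worst corner with $\epsilon=.5$, and whether the extrema over $V$ really sit at corners) as an unresolved obstacle. That verification is the proof: in the paper it occupies the appendix, where the corner points of the overlap $H$ are computed via \eqref{points}, the constraints become explicit functions $L(\epsilon_1,\epsilon_2)$ and $U(\epsilon_1,\epsilon_1+\epsilon_2)$ of the eigenvalues, and monotonicity arguments in $x$ and $y$ locate the extrema at $(1.456,2.236)$ and $(1.618,.618)/(1.456,.618)$, yielding $2.281-.952\delta$ and $1.198(1+\delta)$ and the admissible tolerance range $\delta\in[0,.5037]$. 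Without an argument of this kind (or a check that your two conditions, with $c_w=(1+\alpha\beta)/(\beta+\mu)$, $c_y=(\alpha\mu-1)/(\beta+\mu)$, hold for every $\alpha$ in the stated interval and every $(\lambda_1,\lambda_2)\in V$), the theorem is not proved. Two smaller points: your condition (a) is evaluated at the endpoint $|w|=\tfrac{1}{\beta-1}$, which is the easiest point rather than the binding one --- what must be shown is that for every state in the region with $|w|$ beyond the ``harmless wrong bit'' threshold $\tfrac{2-\beta}{\beta(\beta-1)}$ the sensed functional lies outside the flaky band \emph{and} has the sign of $w$ (your condition (b), correctly quantified, actually subsumes this, but as written (a) does not do the job); and the bit-indexing ``lag'' you worry about should be resolved, not left as an alternative --- in the appendix's formulation \eqref{(T_Q)} the bit is a function of the same state it acts on, so the invariance argument goes through without enlarging the region.
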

We leave the proof of Theorem $\eqref{range}$ to the appendix.  This theorem in some sense parallels  Theorem $\eqref{b-encod+}$ in that an admissable range for the "multiplication" parameter  ($\alpha$ in GRE, $\beta$ in beta-encoders) is specified for a given quantizer tolerance $\epsilon$; however, we stress that the specific value of $\beta$ is needed in order to recover the input from the bitstream $(b_j)$ in beta-encoders.  In contrast, a GRE encoder can be built with a multiplier $\alpha$ set at any value within the range $[1.198+1.198 \epsilon, 2.281 - .952 \epsilon]$, and as long as this multiplier element  has enough precision that the true value of $\alpha$ will not stray from an interval $[\alpha_{low}, \alpha_{high}] \subset [1.198+1.198 \epsilon, 2.281 - .952 \epsilon]$, then the resulting bitstreams $(b_j)$ will always represent a series expansion of the input $x$ in base $\gamma$ of Theorem $\eqref{exp}$, which does not depend on $\alpha$.  The base $\gamma$ does however depend on the leakage parameters $\lambda_1$ and $\lambda_2$, which are not known a priori to the decoder and can also vary in time from input to input: as discussed earlier, the only information available to the decoder a priori is that $(\lambda_1,\lambda_2) \in [.9,1]^2$ in virtually all cases of interest, and $(\lambda_1,\lambda_2) \in [.95,1]^2$, or $\gamma \in [\phi^{-1}, \frac{20}{19} \phi^{-1}] \approx [.618, .6505]$ in most cases of interest.  In the next section, we show that the upper bound of $.6505$ is sufficiently small that the value of $\gamma$ can be recovered with exponential precision from the encoded bitstreams of a pair of real numbers $(x,-x)$.  In this sense, GRE encoders are robust with respect to leakage in the delay elements, imprecisions in the multiplier $\alpha$, { \it and } quantization error.

\section{Determining $\gamma$}
\subsection{Approximating $\gamma$ using an encoded bitstream for x = 0}
Recall that by Theorem $\eqref{daub}$, exponentially precise approximations $\tilde{\gamma}$ to the root $\gamma$ in Theorem $\eqref{exp}$ are sufficient in order to reconstruct subsequent input $x_n = f(t_n) \in [-1,1]$ whose bit streams are expansions in root $\gamma$ with exponential precision.  In this section, we present a method to approximate $\gamma$ with such precision using the only information at our disposal at the decoding end of the quantization scheme: the encoded bitstreams of real numbers $x \in [-1,1]$.  More precisely, we will be able to recover the value $\gamma = \beta^{-1} $ using only a single bitstream corresponding to a beta-expansion of the number 0.  It is easy to adapt this method to slow variations of $\gamma$ in time, as one can repeat the following procedure at regular time intervals during quantization, and update the value of $\gamma$ accordingly.  
\\
\\
 The analysis that follows will rely on the following theorem by Peres and Solomyak $\cite{1}$:
\begin{theorem}[Peres-Solomyak] [$\delta$-transversality]
Consider the intervals $I_{\rho} = [0,\rho]$.  If $\rho \leq .6491...$, then for any $g$ of the form
\begin{equation}
g(x) = 1 + \sum_{j=1}^{\infty} b_j x^j \textrm{,                    }  b_j \in \{-1,0,1\}
\label{g}
\end{equation}
and any $x \in I_{\rho}$, there exists a $\delta_{\rho} > 0$ such that if $g(x) < \delta_{\rho}$ then $g'(x) < -\delta_{\rho}$.  Furthermore, as $\rho$ increases to $.6491...$, $\delta_{\rho}$ decreases to 0.
\label{trans}
\end{theorem}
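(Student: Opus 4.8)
The plan is to follow the Peres--Solomyak ``$(\ast)$-function'' method, which splits into a soft compactness reduction and a concrete extremal analysis.

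\emph{Step 1 (reduction to a qualitative statement).} I would topologize the family $\mathcal{G}$ of all power series of the form \eqref{g} by coefficientwise convergence; since each coefficient lies in the compact set $\{-1,0,1\}$, $\mathcal{G}$ is compact, and for fixed $\rho<1$ the maps $g\mapsto g|_{[0,\rho]}$ and $g\mapsto g'|_{[0,\rho]}$ are continuous into $C[0,\rho]$. It then suffices to prove the qualitative assertion $(\mathrm{Q})$: \emph{for every $\rho<\rho_0:=0.6491\ldots$, every $g\in\mathcal{G}$ and every $x\in[0,\rho]$, $g(x)\le 0$ implies $g'(x)<0$}. Indeed, if $\delta$-transversality failed on some such $[0,\rho]$ for every $\delta>0$, one could choose $g_n\in\mathcal{G}$, $x_n\in[0,\rho]$ with $g_n(x_n)<1/n$ and $g_n'(x_n)\ge-1/n$, and pass to a subsequence with $g_n\to g\in\mathcal{G}$, $x_n\to x\in[0,\rho]$, obtaining $g(x)\le0$ and $g'(x)\ge0$ in contradiction with $(\mathrm{Q})$. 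This produces a $\delta_\rho>0$ for each $\rho<\rho_0$, and the ``furthermore'' clause is the complementary statement that the property degenerates exactly at $\rho_0$: there $\mathcal{G}$ contains an extremal series $g_\ast$ touching the forbidden regime at $\rho_0$ (i.e.\ $g_\ast(\rho_0)\le 0$ while $g_\ast'(\rho_0)\ge 0$), so no uniform $\delta$ can survive as $\rho\uparrow\rho_0$.

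\emph{Step 2 (the extremal analysis).} For fixed $x\in(0,\rho_0]$, assertion $(\mathrm{Q})$ is equivalent to $\sup\{g'(x):g\in\mathcal{G},\ g(x)\le 0\}<0$. I would evaluate this supremum by an exchange argument on the coefficient sequence: changing one coefficient $b_j$ alters $g(x)$ by $\pm x^{j}$ and $g'(x)$ by $\pm j x^{j-1}$, and comparing these increments --- their ratio $x/j$ being decreasing in $j$ --- shows that a maximizing sequence must be highly structured, built from an initial block of $-1$'s that drives $g$ down, a block of $+1$'s that lifts the derivative, and a negative tail that restores $g(x)\le 0$; every such candidate has an explicit rational generating function $P(x)/(1-x)$ with $P$ a low-degree integer polynomial. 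Assertion $(\mathrm{Q})$ then collapses to a one-variable inequality over this finite family, and $\rho_0=0.6491\ldots$ emerges as the largest $\rho$ for which the inequality survives, i.e.\ as the algebraic root encoding the marginal configuration $g_\ast(x)=g_\ast'(x)=0$. (Alternatively, for any fixed $\rho<\rho_0$ one can use the splitting $g=P_N+x^{N+1}R$ with $\deg P_N\le N$, $R\in\mathcal{G}$, absorb the tail contributions --- all $O(Nx^{N})$ --- into the error budget, and reduce $(\mathrm{Q})$ to a finite verification over the $3^{N+1}$ polynomials of degree $\le N$ with coefficients in $\{-1,0,1\}$.)

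\emph{Expected obstacle.} The compactness step is routine and the closing rational-function (or bounded-degree) estimate is mechanical. The real difficulty is justifying the structure of the extremizer in Step 2: the exchange argument must be made rigorous in the presence of the infinite tail $\sum_{j>N}b_j x^{j}$, so that the supremum is genuinely achieved in the limit by the structured finite patterns and not by some configuration with scattered zeros or several alternating blocks, and one must carry out the bookkeeping that isolates the critical configuration and hence pins the constant to $0.6491\ldots$. This monotonicity-of-the-extremizer analysis is the technical core of the Peres--Solomyak theorem and the step I expect to be delicate; everything else is either soft or computational.
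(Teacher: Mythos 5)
First, note that the paper does not prove this statement at all: it is quoted from Peres--Solomyak \cite{1}, and the only indication of method is the surrounding remark that the bound $.6491\ldots$ arises as the smallest double zero of the \emph{enlarged} class $\bar{B}=\{1+\sum_{n\ge1}b_nx^n:\ b_n\in[-1,1]\}$. So your proposal has to be measured against that cited argument, and there it has a genuine gap. Your Step 1 (compactness reduction from uniform $\delta$-transversality to the qualitative statement, for each fixed $\rho<\rho_0$) is fine but soft. The substance is Step 2, and what you sketch there is not the Peres--Solomyak $(\ast)$-function method despite the label, and it is incomplete at exactly the decisive point, by your own admission: you do not show that the supremum of $g'(x)$ over $\{g\in\mathcal{G}:g(x)\le 0\}$ is attained by the block-structured configurations. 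The device that makes this tractable in \cite{1} is missing from your plan: one first \emph{relaxes} to coefficients in $[-1,1]$ (transversality for the convexified class trivially implies it for $\{-1,0,1\}$), where an exchange/bang-bang argument is legitimate and the extremal configuration is a $(\ast)$-function $1-\sum_{i<k}a_ix^i+\sum_{i\ge k}x^i$ with at most one fractional coefficient; the key lemma then reduces $\delta$-transversality of the entire class to verifying two explicit inequalities, $h(x_0)>\delta$ and $h'(x_0)<-\delta$, for a single such $h$. Without the relaxation, your discrete exchange argument has no way to preserve the constraint $g(x)\le 0$ under $\pm x^j$ swaps, and the infinite-tail bookkeeping you flag as ``delicate'' is precisely the unproved core.

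Moreover, the quantitative claims you attach to Step 2 are wrong for the class you work with. The constant $.6491\ldots$ is the smallest double zero of the \emph{relaxed} class $\bar{B}$; it is an artifact of the method, not the critical threshold of the discrete class $B$. The paper itself points this out: by \cite{7} the first known $\{-1,0,1\}$ power series with a double zero occurs only at $t_0\approx .68$, so your assertion that an extremal analysis over $\mathcal{G}$ would ``pin the constant to $0.6491\ldots$'', and your justification of the ``furthermore'' clause via an extremal $g_\ast\in\mathcal{G}$ with $g_\ast(\rho_0)\le 0\le g_\ast'(\rho_0)$, are unsupported and almost certainly false for $\mathcal{G}$; the degeneration $\delta_\rho\downarrow 0$ at $.6491\ldots$ reflects the fractional-coefficient extremizer of $\bar{B}$. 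Your parenthetical fallback (finite verification over the $3^{N+1}$ heads) can at best certify transversality for one fixed $\rho$ strictly below the threshold with an a priori unknown margin $\delta$; it cannot identify the constant, and $N$ blows up as $\rho\uparrow\rho_0$. In short: the missing idea is the convexification plus the $(\ast)$-function domination lemma, which is the actual content of the cited proof.
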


Theorem $\eqref{trans}$ has the following straightforward corollary:
\begin{corollary}
If $g(x)$ is a polynomial or power series belonging to the class $B$ given by
\begin{equation}
B = \{ \pm 1 + \sum_{j=1}^{\infty} b_j x^j  \textrm{,     }: b_j \in \{-1,0,1\} \}
\label{g+}
\end{equation}
then $g$ can have no more than one root on the interval $(0, .6491]$.  Furthermore, if such a root exists, then this root must be simple.
\label{root}
\end{corollary}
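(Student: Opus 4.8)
The plan is to reduce Corollary~\ref{root} to the $\delta$-transversality statement of Theorem~\ref{trans} by a sign/parity argument. First I would observe that the class $B$ is symmetric under negation: if $g \in B$ then $-g \in B$, since negating flips the leading $\pm 1$ and negates each $b_j \in \{-1,0,1\}$, which stays in $\{-1,0,1\}$. Hence it suffices to treat a $g$ with leading coefficient $+1$, i.e. $g$ of the form \eqref{g}, and the general case follows by replacing $g$ with $-g$ (note $g$ and $-g$ have the same roots, with the same multiplicities). So from now on $g(x) = 1 + \sum_{j=1}^\infty b_j x^j$ with $b_j \in \{-1,0,1\}$, and we restrict to $x \in (0, .6491] \subset I_\rho$ with $\rho = .6491$.

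Next I would argue uniqueness of the root. Suppose, for contradiction, that $g$ has two roots $x_1 < x_2$ in $(0,.6491]$. At $x_1$ we have $g(x_1) = 0 < \delta_\rho$, so transversality gives $g'(x_1) < -\delta_\rho < 0$; the same holds at $x_2$: $g'(x_2) < -\delta_\rho < 0$. So $g$ is strictly decreasing through each of its roots. Since $g(x_1) = g(x_2) = 0$ and $g$ is continuous, between $x_1$ and $x_2$ the function must come back up to $0$ from below, so there is a point where $g$ attains a local minimum on $(x_1,x_2)$ with a nonnegative value, or more simply: $g$ is negative just to the right of $x_1$ and must return to $0$ at $x_2$, so it has an interior local minimum $x^\ast \in (x_1,x_2)$ where $g'(x^\ast) = 0$ and $g(x^\ast) \le 0 < \delta_\rho$. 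But then transversality applied at $x^\ast$ forces $g'(x^\ast) < -\delta_\rho < 0$, contradicting $g'(x^\ast) = 0$. Hence at most one root exists in $(0,.6491]$.

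Finally, simplicity of the root: if $x_0 \in (0,.6491]$ is a root, then $g(x_0) = 0 < \delta_\rho$, so by Theorem~\ref{trans} we get $g'(x_0) < -\delta_\rho < 0$, in particular $g'(x_0) \neq 0$, so $x_0$ is a simple zero. This also handles the power-series case without extra work, since Theorem~\ref{trans} is stated for power series $g$ of the form \eqref{g} and the differentiated series converges on $[0,\rho]$ (indeed $\rho < 1$, so both $\sum b_j x^j$ and $\sum j b_j x^{j-1}$ converge absolutely on $I_\rho$, justifying the use of $g'$).

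I do not expect a genuine obstacle here; the corollary is essentially a direct packaging of transversality. The one point requiring a little care is the interior-local-minimum step in the uniqueness argument: I should make sure to invoke continuity of $g$ on the compact interval $[x_1,x_2] \subset (0,\rho]$ and the fact that $g < 0$ immediately to the right of $x_1$ (from $g'(x_1) < 0$) to guarantee the minimum over $[x_1,x_2]$ is attained at an interior point and is $\le 0$. A cosmetic subtlety is that the statement says the root lies in $(0,.6491]$ while transversality is quoted for $\rho \le .6491\ldots$; I would simply fix $\rho = .6491$ (which satisfies $\rho \le .6491\ldots$) so that $(0,.6491] \subset I_\rho$ and $\delta_\rho > 0$, and everything above goes through verbatim.
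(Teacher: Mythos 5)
Your argument is correct, and it is essentially the argument the paper intends: the paper states Corollary \ref{root} as a ``straightforward'' consequence of Theorem \ref{trans} without writing out a proof, and the mechanism you use (a zero forces $g' < -\delta_\rho$ there, so $g$ cannot return to zero, and the nonvanishing derivative gives simplicity) is exactly the one the paper relies on later, e.g.\ inside the proof of Theorem \ref{mytheorem}. The reduction to leading coefficient $+1$ by replacing $g$ with $-g$ and the choice $\rho = .6491$ are handled correctly, so nothing is missing.
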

In $\cite{1}$, Peres and Solomyak used Theorem $\eqref{trans}$ to show that the distribution $\nu_\lambda$ of the random series  $\sum \pm \lambda^n$ is absolutely continuous for a.e. $\lambda \in (1/2, 1)$.   The estimates in Theorem $\eqref{trans}$ are obtained by computing the smallest double zero of a larger class of power series $\bar{B} := \{1 + \sum_{n=1}^{\infty} b_nx^n : b_n \in [-1,1] \}$.   The specific upper bound $\rho = .6491...$ for which $\delta$-transversality holds on the interval $I_{\rho}$ is the tightest bound that can be reached using their method of proof, but the true upper bound cannot be much larger; in $\cite{7}$, a power series $g(t)$ belonging to the class $B$ in $\eqref{g+}$ is constructed which has a double zero at $t_0 \approx .68$ (i.e., $g(t_0) = 0$ and $g'(t_0) = 0$), and having a double root obviously contradicts $\delta$-transversality.
\\
\\
We now show how to use Theorem $\eqref{trans}$ to recover $\gamma$ from a bitstream $(b_j)_{j=1}^{\infty}$ produced from the GRE scheme $\eqref{(lambda recurs)}$ corresponding to input $x=0$.  We assume that the parameters $(\alpha,\nu)$ of the quantizer $Q_{\alpha}^{\nu}(u,v)$ used in this implementation are within the range provided by Theorem $\eqref{range}$.  Note that such a bitstream $(b_j)_{j=1}^{\infty}$ is not unique if $\nu > 0$, or if more than one pair $(\lambda_1,\lambda_2)$ correspond to the same value of $\gamma$.   Nevertheless, Theorem $\eqref{exp}$ implies that $ \sum_{j=1}^{\infty} b_j \gamma^{j} = 0$, so that $\gamma$ is a root of the power series $F(t) = b_1 + \sum_{j=1}^{\infty} b_{j+1} t^j$.  Suppose we know that $\beta \geq 1.54056$, or that $\gamma \leq .6491$.  Since $F(t)$ belongs to the class $B$ of Corollary $\eqref{root}$, $\gamma$ must necessarily be the { \it smallest} positive root of $F(t)$.  In reality one does not have access to the entire bitsream $(b_j)_{j=1}^{\infty}$, but only a finite sequence $(b_j)_{j=1}^{N}$.  It is natural to ask whether we can approximate the first root of $F(t)$ by the first root of the polynomials $P_n(t) = b_1 + \sum_{j=1}^{n} b_{j+1} t^j$.   Since the $P_n$ still belong to the class $B$ of Corollary $\eqref{root}$, $|P_n(t)|$ has {\it at most} one zero on the interval $[\phi^{-1},.6491] \approx [.618, .6491]$.  The following theorem shows that, if it is known a priori that $\gamma \leq .6491 - \epsilon$ for some $\epsilon > 0$, then for $n$ sufficiently large, $|P_n(t)|$ is guaranteed to have a root $\gamma_n$ in $[0, .6491]$, and  $| \gamma - \gamma_n |$ decreases exponentially as $n$ increases.  

\begin{theorem}
Suppose that for some $\epsilon > 0$,  it is known that $\gamma \leq \gamma_{high} = .6491 - \epsilon$.   Let $\delta > 0$ be such that $\delta$-transversality holds on the interval $[0, \gamma_{high}]$.  Let $N$ be the smallest integer such that $\gamma^{N+1} \leq (1 - \gamma) \epsilon \delta$.  Then for $n \geq N$, \begin{enumerate}
\renewcommand{\labelenumi}{(\alph{enumi})}
\item $P_n$ has a unique root $\gamma_n$ in $[0,.6491]$
\item $|\gamma - \gamma_n| \leq C_1 \gamma^n$, where $C_1 = \frac{\gamma}{\delta(1 - \gamma)}$.
\end{enumerate}  
\label{mytheorem}
\end{theorem}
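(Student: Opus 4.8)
The plan is to compare the partial sums $P_n$ with the full power series $F(t) = b_1 + \sum_{j=1}^{\infty} b_{j+1}t^j$, whose root at $\gamma$ is already known from Theorem~\ref{exp} (applied to the input $x=0$: letting $N\to\infty$ gives $\sum_{n\ge 1} b_n\gamma^n = 0$, hence $F(\gamma)=0$), and then to use $\delta$-transversality to upgrade ``$P_n$ is small at $\gamma$'' to ``$P_n$ has a simple root near $\gamma$.'' First I would record the tail estimate: since $|b_j|\le 1$ and $F(\gamma)=0$,
\[
  |P_n(\gamma)| = |P_n(\gamma)-F(\gamma)| = \Big|\sum_{j=n+1}^{\infty} b_{j+1}\gamma^{j}\Big| \le \sum_{j=n+1}^{\infty}\gamma^{j} = \frac{\gamma^{n+1}}{1-\gamma} =: \eta_n .
\]
By the choice of $N$ we have $\eta_N \le \epsilon\delta$, and since $\gamma<1$ this gives $\eta_n \le \epsilon\delta \le \delta$ for all $n\ge N$ (here $\epsilon<1$ because $\gamma_{high} = .6491-\epsilon>0$).

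Next comes the transversality step. All coefficients of $P_n$ lie in $\{-1,1\}$, so after possibly replacing $P_n$ by $-P_n$ — which changes neither its roots nor the class it belongs to — I may assume $P_n(0)=1$, so $P_n$ has the form \eqref{g}. By Theorem~\ref{trans}, at every $t\in[0,\gamma_{high}]$ with $P_n(t)\le\delta$ one has $P_n'(t)\le -\delta$. I would deduce from this that the set $T := \{t\in[0,\gamma_{high}] : P_n(t)\le\delta\}$ is an interval of the form $[a,\gamma_{high}]$: it is nonempty because $P_n(\gamma)\le \eta_n<\delta$ and $\gamma\le\gamma_{high}$; it does not contain $0$ since $P_n(0)=1>\delta$; and it is ``absorbing to the right,'' because at any point where $P_n$ climbed back up to the level $\delta$ its derivative would be $\le -\delta<0$, a contradiction. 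Hence $P_n(a)=\delta$ and $P_n$ is strictly decreasing on $[a,\gamma_{high}]$ with slope $\le -\delta$ throughout; by the intermediate value theorem it has a unique zero $\gamma_n$ there. Uniqueness of the root on all of $[0,.6491]$ then follows from Corollary~\ref{root}, proving (a).

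For (b), I would localize $\gamma_n$ quantitatively. Write $c = P_n(\gamma)$, so $|c|\le\eta_n$; whichever sign $c$ has, $\gamma_n$ lies between $\gamma$ and $\gamma\pm|c|/\delta$ by the slope bound, and on the segment joining $\gamma$ to $\gamma_n$ we have $|P_n'|\ge\delta$, so
\[
  |c| = |P_n(\gamma)-P_n(\gamma_n)| \ge \delta\,|\gamma-\gamma_n|, \qquad\text{hence}\qquad
  |\gamma-\gamma_n| \le \frac{|c|}{\delta} \le \frac{\eta_n}{\delta} = \frac{\gamma}{\delta(1-\gamma)}\,\gamma^{n} = C_1\gamma^n .
\]
Since $n\ge N$ forces $C_1\gamma^n = \eta_n/\delta \le \epsilon$, the root satisfies $\gamma_n\in[\gamma-\epsilon,\gamma+\epsilon]\subseteq[0,.6491]$, using $0<\gamma\le\gamma_{high}=.6491-\epsilon$; this also confirms $\gamma_n$ is the root asserted in (a).

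The step I expect to be the main obstacle is the passage from the purely pointwise transversality inequality to the global monotonicity picture — in particular verifying that $T$ really is a single interval reaching $\gamma_{high}$, and that $\gamma_n$ falls inside the sub-interval where the slope bound is available. The one genuinely delicate point arises when $\gamma$ sits within $\eta_n/\delta$ of the right endpoint $\gamma_{high}$: then $\gamma_n$ could in principle lie in $(\gamma_{high},.6491]$, marginally outside the region on which $\delta$-transversality was assumed, and one must either be content with the slightly weaker localization $\gamma_n\in[0,.6491]$ or observe that the Peres--Solomyak bound in fact grants $\delta$-transversality on an interval a touch larger than $[0,\gamma_{high}]$. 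The remaining ingredients — the tail bound, the sign normalization via $P_n\mapsto-P_n$, and the bookkeeping around the index $N$ — are routine.
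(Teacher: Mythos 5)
Your proposal is correct and follows essentially the same route as the paper: the tail bound $|P_n(\gamma)| \le \gamma^{n+1}/(1-\gamma)$ coming from Theorem~\ref{exp}, $\delta$-transversality to force $P_n' \le -\delta$ wherever $P_n \le \delta$, and the mean value theorem to locate the root and bound $|\gamma-\gamma_n|$; your sublevel-set formulation merely unifies the paper's two cases ($P_n(\gamma)\le 0$ versus $P_n(\gamma)>0$). The boundary subtlety you flag --- that when $P_n(\gamma)>0$ the root may fall in $(\gamma_{high},.6491]$, just outside the interval on which $\delta$-transversality was assumed --- is genuine, and the paper's Case (2) silently makes the same extension of the derivative bound to $[\gamma,\gamma+\epsilon]$.
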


\begin{proof}
Without loss of generality, we assume $b_1 = 1$, and divide the proof into 2 cases: (1) $P_n(\gamma) \leq 0$, and (2) $P_n(\gamma)  > 0$.   The proof is the same for $b_1 = -1$, except that the cases are reversed.
\\
\\
{\bf Case (1)}: In this case, many of the restrictions in the theorem are not necessary; in fact, the theorem holds here for all $n > 0$ and $\epsilon \geq 0$.    $P_n(t)$ has opposite signs at $t=0$ and $t=\gamma$, so $P_n$ must have at least one root $\gamma_n$ in between.  Moreover, this root is unique by Theorem $\eqref{trans}$.   To prove part (b) of the theorem, observe that  Theorem $\eqref{trans}$ implies that if $P_n(t_0) \leq \delta$ at some $t_0 \in (0,.6491)$, then $P_n'(t) \leq - \delta$ in the interval $[t_0,.6491)$.  In particular, $P_n(t) \leq \delta$ and $P_n'(t) \leq - \delta$ for $t \in [\gamma_n, \gamma]$.  By the Mean Value Theorem, $|P_n(\gamma)| = |P_n(\gamma) - P_n(\gamma_n)| = |P_n'(\xi)| |\gamma - \gamma_n|$ for some $\xi \in [\gamma_n, \gamma]$, so that
\begin{eqnarray}
    |\gamma - \gamma_n| &\leq&  \frac{|P_n(\gamma)|}{\inf_{\xi \in [\gamma_n,\gamma]} |P_n'(\xi)|} \nonumber \\
    &\leq&  \frac{|P_n(\gamma)|}{\delta} \nonumber \\ 
    &\leq&  \frac{\gamma^{n+1}}{(1 - \gamma)\delta} .
    \nonumber
  \end{eqnarray}  
 The inequality $|P_n(\gamma)| \leq \frac{\gamma^{n+1}}{1-\gamma}$ follows from Theorem $\eqref{exp}$.
\\
\\
{\bf Case (2)}:  If $P_n(\gamma) > 0$, then Theorem $\eqref{trans}$ implies that the first positive root of $P_n$, if it exists, must be greater than $\gamma$.   Whereas in Case (1), $P_n$ was guaranteed to have a root $\gamma_n \leq \gamma$ for all $n \geq 0$, in this case $P_n$ might not even have a root in $(0,1)$; for example, the first positive root of the polynomial $P_4(t) = 1 - t - t^2 + t^3$ occurs at $t = 1$.   However, if $n$ is sufficiently large, $P_n$ will have a root in $[\gamma, .6491]$.  Precisely, let $n \geq N$, where $N$ is the smallest integer such that $\gamma^{N+1} \leq (1 - \gamma) \epsilon \delta$.  Then $P_n(\gamma) \leq \frac{\gamma^{n+1}}{1-\gamma} \leq \frac{\gamma^{N+1}}{1-\gamma}  \leq \epsilon \delta \leq \delta$.  Since $\gamma + \epsilon \leq .6491$, Theorem $\eqref{trans}$ gives that $P_n'(t) \leq - \delta$ for $t \in [\gamma, \gamma + \epsilon]$, and    
\begin{eqnarray}
P_n(\gamma + \epsilon) &\leq& P_n(\gamma) + \epsilon \sup_{t \in [\gamma, \gamma+\epsilon] } P_n'(t) \nonumber \\ &\leq& \epsilon \delta - \epsilon \delta \nonumber \\  &=& 0 \nonumber.
\end{eqnarray}
We have shown that $P_n(\gamma) > 0$ and $P_n(\gamma + \epsilon) \leq 0$, so $P_n$ is guaranteed to have a root $\gamma_n$ in the interval $[\gamma, \gamma + \epsilon]$, which is in fact the unique root of $P_n$ in $[0,.6491]$.  Furthermore, the discrepency between $\gamma_n$ and $\gamma$ becomes exponentially small as $n > N$ increases, as
\begin{center}
$| \gamma_n - \gamma | \leq \frac{|P_n(\gamma)|}{\inf_{\xi \in [\gamma,\gamma_n]} |P_n'(\xi)|} 
  \leq  \frac{\gamma^{n+1}}{(1 - \gamma)\delta}$  .
  \end{center}
\begin{flushright}
$\blacksquare$
 \end{flushright}
 \end{proof}
\begin{remark} In $\cite{1}$ it is shown that $\delta$-transversality holds on $[0, .63]$ with $\delta = \delta_{.63} = .07$.  This interval corresponds to $\epsilon = .6491 - .63 = .0191$ in Theorem $\eqref{mytheorem}$.   If $\gamma$ is known a priori to be less than $.63$, then $\frac{\gamma}{1 - \gamma} \leq \frac{.63}{1-.63} \approx 1.7027$, and $N$ of Theorem $\eqref{mytheorem}$ satisfies
\begin{center}
 $N \leq \frac{\log{\epsilon\delta} - \log{1.7027}}{\log{.63}} \leq 16$.
 \end{center}
\end{remark}
 
 Of course, even if we know $P_n(t)$,  $n$ will be too large to solve for $\gamma_n$ analytically.  However, if we can approximate $\gamma_n$ by $\tilde{\gamma}_n$ with a precision of $O(\gamma^n)$, then $|\gamma - \tilde{\gamma}_n| \leq |\gamma - \gamma_n| + |\gamma_n - \tilde{\gamma}_n|$ will also be of order $\gamma^n$, so that the estimates $\tilde{\gamma}_n$ are still exponentially accurate approximations to the input signal.   
\\
\\
Indeed, any $\tilde{\gamma}_n \in [\phi^{-1},.6491 - \epsilon]$ satisfying $| P_n(\tilde{\gamma}_n) | \leq \phi^{-n}$ provides such an exponential approximation to $\gamma_n$.  Since $n \geq N$, it follows that $\phi^{-n} \leq \gamma^n \leq \gamma^N \leq \frac{(1 - \gamma) \epsilon \delta}{\gamma} \leq \delta$, and so $P_n'(t) \leq -\delta$ on the interval between $\tilde{\gamma}_n$ and $\gamma_n$.  Thus, 
\begin{eqnarray}
 | \gamma - \tilde{\gamma}_n | &\leq& | \gamma - \gamma_n | + | \gamma_n - \tilde{\gamma}_n | 
 \nonumber \\
 &\leq& C_1 \gamma^{n} +  \frac{|P_n(\tilde{\gamma}_n)|}{\inf_{\xi \in [\tilde{\gamma}_n,\gamma_n] or \xi \in [\gamma_n, \tilde{\gamma}_n]} |P_n'(\xi)|}  \nonumber \\
 &\leq& C_1 \gamma^{n} + \frac{\phi^{-n}}{\delta} \nonumber \\
 & \leq& C_1 \gamma^{n} + \frac{\gamma^n}{\delta} \nonumber \\
 & =  & C_2 {\gamma^n}
 \label{search}
\end{eqnarray}
where $C_2 = C_1 + \frac{1}{\delta} = \frac{1}{\delta(1 - \gamma)}$.

\subsection{Approximating $\gamma$ with beta-expansions of $(-x,x)$}

The method for approximating the value of $\gamma$ in the previous section requires a bitstream $b$ corresponding to running the GRE recursion $\eqref{(lambda recurs)}$ with specific input $x = 0$.  This assumes that the reference value $0$ can be measured with high precision, which is an impractical constraint.  We can try to adapt the argument using bitstreams of an encoded pair $(x,-x)$ as follows.   Let   $b$ and $c$ be bitstreams corresponding to $x$ and $-x$, respectively.  Define $d_j = b_j + c_j$. Put $k = min \{j \in N |  d_{j+1} \neq 0 \}$, and consider the sequence $\bar{d} = (\bar{d}_j)_{j=1}^{\infty}$ defined by $\bar{d}_j = \frac{1}{2} d_{j+k}$.  Since $d_j \in \{-2,0,2\}$, it follows that $\bar{d}_j \in \{-1,0,1\}$, and by Theorem $\eqref{exp}$, we have that
\begin{eqnarray}
\sum_{n=1}^{\infty} \bar{d}_n \gamma^n &=&  \frac{1}{2} \gamma^{-k} \sum_{n=1}^{\infty} d_n \gamma^n \nonumber \\
          &=& \frac{1}{2} \gamma^{-k} \Big[ \sum_{n=1}^{\infty} b_n \gamma^n + \sum_{n=1}^{\infty} c_n \gamma^n \Big]  \nonumber \\
             &=& 0      
             \end{eqnarray}                  
so that 
\begin{equation}
| \sum_{n=1}^{N} \bar{d}_n \gamma^n | \leq C_{\gamma} \gamma^N.
\label{close}
\end{equation}
where the constant $C_{\gamma} = \frac{\gamma}{1 - \gamma}$.

Equation $\eqref{close}$, along with the fact that the polynomials $\bar{P}_N(t) :=  \bar{d}_1 + \sum_{j=1}^{N} \bar{d}_{j+1} t^j $ are of the form $\eqref{g+}$, allows us to apply Theorem $\eqref{mytheorem}$ to conclude that for $N$ sufficiently large, the first positive root $\gamma_N$ of $\bar{P}_N$ becomes exponentially close to $\gamma$.  However, note that the encoding of $(x,-x)$ is {\it not} equivalent to the encoding of $0$.  The value of $k = min \{j \in N |  d_{j+1} \neq 0 \}$ used to define the sequence $\bar{d}$ can be { \it arbitrarily large}.  In fact, if an ideal quantizer $Q_{\alpha}^0$ is used, then the bitstreams $b$ and $c$ are uniquely defined by $b \equiv -c$, so that $d \equiv 0$.  Thus, this method for recovering $\gamma$ actually { \it requires} the use of a flaky quantizer $Q_{\alpha}^{\nu}$.  To this end, one could intentially implement GRE with a quantizer which toggles close to, but not exactly at zero.  One could alternatively send not only a single pair of bitstreams $(b,c)$, but {\it multiple} pairs of bitstreams $(b^l, c^l)$ corresponding to several pairs $(x_l,-x_l)$, to increase the chance of having a pair that has $b^l,c^l \neq 0$ for relatively small $j$. 
\\
\\
Figure 3 plots several instances of $\bar{P}_{8}, \bar{P}_{16}$, and $\bar{P}_{32}$, corresponding to $\gamma = .64375$.  The quantizer $Q_{\alpha}^{\nu}(u,v)$ is used, with $\alpha = 2$ and $\nu = .3$.  These values of $\alpha$ and $\nu$ generate bounded sequences $(u_n)$ for all $(\lambda_1,\lambda_2) \in [.9,1]^2$ by Theorem $\eqref{range}$.
\\
\\
As shown in Figure 2, numerical evidence suggests that 10 iterations of Newton's method starting from $x_0 = \phi^{-1} \approx .618$ will compute an approximation to $\gamma_N$, the first root of $\bar{P}_N$, with the desired exponential precision.  The figure plots $\gamma$ versus the error $| \gamma - \tilde{\gamma}_N|$, where $\tilde{\gamma}_N$ is the approximation to $\gamma_N$ obtained via a 10- step Newton Method, starting from $x_0 = .618$.  More precisely, for each $N$, we ran $100$ different trials, with $x$ and $\gamma$ picked randomly from the intervals $[-1,1]$ and $[.618,.7]$ respectively, and independently for each trial.   The worst case approximation error of the 100 trials is plotted in each case.  The quantizer used is $Q_{\alpha}^{\nu}(u,v)$ with $\nu = .3$, and $\alpha$ picked randomly in the interval $[1.7,2]$, independently for each trial.  Again, Theorem $\eqref{range}$ shows that these values of $\alpha$ and $\nu$ generate bounded sequences $(u_n)$ for all $(\lambda_1,\lambda_2) \in [.9,1]^2$.

\section{Beta-expansions Revisited}
Even though our analysis of the previous section was motivated by leaky GRE encoders, it can be applied to general beta-encoders to recover the value of $\beta$ at any time during quantization.  From the last section, we have: 
\begin{theorem}
Let $F(t) = \sum_{j=0}^{\infty}b_j t^j$ be a power series belonging to the class $B = \{ \pm 1 + \sum_{j=1}^{\infty} b_j x^j  \textrm{,     } b_j \in \{-1,0,1\} \}$.  Suppose that $F$ has a root at $\gamma \in [\gamma_{low},\gamma_{high}]$, where $\gamma_{high} = .6491 - \epsilon$ for some $\epsilon > 0$.   Let $\delta > 0$ be such that $\delta$-transversality holds on the interval $[0, \gamma_{high}]$.  Let $N$ be the smallest integer such that $\gamma^{N+1} \leq \epsilon \delta (1 - \gamma)$. Then for $n \geq N$, \begin{enumerate}
\renewcommand{\labelenumi}{(\alph{enumi})}
\item The polynomials $P_n(t) = \sum_{j=0}^{n}b_j t^j$ have a unique root $\gamma_n$ in $[0,.6491]$
\item Any $\tilde{\gamma} \in [\gamma_{low}, \gamma_{high}]$ which satisfies $| P_n(\tilde{\gamma}) | \leq (\gamma_{low})^n$ also satisfies $|\tilde{\gamma} - \gamma| \leq \tilde{C}|\gamma|^n$, where $\tilde{C} = \frac{1}{\delta(1 - \gamma)}$.
\end{enumerate}  
\label{mybigtheorem}
\end{theorem}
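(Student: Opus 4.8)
The plan is to follow the proof of Theorem~\ref{mytheorem} essentially verbatim; the only structural change is that the tail estimate on $|P_n(\gamma)|$ — which in the leaky-GRE setting was supplied by Theorem~\ref{exp} applied to the input $x=0$ — is now available for free from the standing hypothesis $F(\gamma)=0$. Indeed, since $b_j\in\{-1,0,1\}$ and $0<\gamma<1$,
\[
|P_n(\gamma)| \;=\; \Bigl| F(\gamma)-\sum_{j=n+1}^{\infty} b_j\gamma^j\Bigr| \;\le\; \sum_{j=n+1}^{\infty}\gamma^j \;=\; \frac{\gamma^{n+1}}{1-\gamma},
\]
which plays the role that \eqref{close} played in Section~III. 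Everything else rests only on Corollary~\ref{root} and on the bootstrapped form of $\delta$-transversality (Theorem~\ref{trans}): once $g(t_0)<\delta$ one has $g(t)<\delta$ and $g'(t)<-\delta$ for all larger $t$ in the transversality interval.

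For part (a) I would assume without loss of generality that $b_0=1$ (the case $b_0=-1$ is identical with the roles of ``$\le$'' and ``$>$'' interchanged, applying transversality to $-P_n$), and split into the cases $P_n(\gamma)\le 0$ and $P_n(\gamma)>0$. In the first case $P_n(0)=1>0\ge P_n(\gamma)$, so the Intermediate Value Theorem gives a root $\gamma_n\in(0,\gamma]$; since $P_n$ belongs to the class $B$ of \eqref{g+}, Corollary~\ref{root} makes it the unique root in $(0,.6491]$, and $P_n(0)=1\ne 0$ rules out a root at $0$. In the second case, for $n\ge N$ one has $P_n(\gamma)\le \gamma^{N+1}/(1-\gamma)\le \epsilon\delta\le\delta$, so transversality forces $P_n'(t)\le-\delta$ on $[\gamma,\gamma+\epsilon]\subseteq[0,.6491]$, hence $P_n(\gamma+\epsilon)\le P_n(\gamma)-\epsilon\delta\le 0$; a root $\gamma_n$ then lies in $[\gamma,\gamma+\epsilon]$ and is again the unique one in $[0,.6491]$ by Corollary~\ref{root}.

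For part (b), given $\tilde\gamma\in[\gamma_{low},\gamma_{high}]$ with $|P_n(\tilde\gamma)|\le(\gamma_{low})^n\le\gamma^n\le\delta$ (the last inequality holding for $n\ge N$ exactly as in the paragraph following Theorem~\ref{mytheorem}), I would apply the Mean Value Theorem twice — once on the interval joining $\gamma_n$ to $\gamma$, once on the interval joining $\gamma_n$ to $\tilde\gamma$ — using $P_n(\gamma_n)=0$ and the fact that transversality bounds $|P_n'|$ below by $\delta$ on each of these intervals. This yields $|\gamma-\gamma_n|\le|P_n(\gamma)|/\delta\le\gamma^{n+1}/((1-\gamma)\delta)$ and $|\tilde\gamma-\gamma_n|\le|P_n(\tilde\gamma)|/\delta\le\gamma^n/\delta$, and the triangle inequality gives
\[
|\tilde\gamma-\gamma| \;\le\; \frac{\gamma^{n+1}}{(1-\gamma)\delta}+\frac{\gamma^n}{\delta} \;=\; \frac{\gamma^n}{(1-\gamma)\delta} \;=\; \tilde C\,\gamma^n,
\]
which is \eqref{search} in the present notation.

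I expect the only delicate point to be bookkeeping with the transversality constant: $\delta$ is only promised on $[0,\gamma_{high}]$, yet in the case $P_n(\gamma)>0$ the root can be pushed up to $\gamma+\epsilon=.6491$, and in part (b) one needs $(\gamma_{low})^n\le\delta$ for $n\ge N$, which requires $\gamma_{low}$ bounded away from $0$. Both are automatic in every application, where $\gamma>1/2$ (indeed $\gamma\ge\phi^{-1}$), so that $\epsilon\le.6491-\gamma_{high}<1<\gamma/(1-\gamma)$ and the chain $(\gamma_{low})^n\le\gamma^N\le\epsilon\delta(1-\gamma)/\gamma\le\delta$ goes through; handling them amounts to reading the transversality hypothesis as holding on a closed interval slightly larger than $[0,\gamma_{high}]$, exactly as was done implicitly in Theorem~\ref{mytheorem}.
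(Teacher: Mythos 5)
Your proposal is correct and is essentially the paper's own argument: the paper gives no separate proof of Theorem \ref{mybigtheorem}, deriving it from the case analysis of Theorem \ref{mytheorem} together with the estimate \eqref{search}, and your adaptation (replacing the Theorem \ref{exp} tail bound by the direct estimate $|P_n(\gamma)|\le\gamma^{n+1}/(1-\gamma)$ from $F(\gamma)=0$) is exactly the intended route. The two bookkeeping points you flag — transversality needed up to $\gamma+\epsilon\le .6491$ rather than only $\gamma_{high}$, and $(\gamma_{low})^n\le\delta$ requiring $\gamma$ bounded away from $0$ — are handled just as implicitly in the paper, so your treatment matches it.
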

This theorem applies to beta encoders, corresponding to implementing the recursion $\eqref{iter}$ with flaky quantizer $Q^{\nu}(u)$ defined by $\eqref{(fqs)}$, and with $\gamma = \beta^{-1}$ known a priori to be contained in an interval $[\gamma_{low}, \gamma_{high}]$.   If $\beta \geq 1.54059$, or $\gamma_{high} \leq .6491$, then we can recover $\gamma = \beta^{-1}$ from either a bitstream corresponding to 0, or a pair of bitstreams $(x,-x)$, using Theorem $\eqref{mybigtheorem}$.  Of course we should not consider only the scheme $\eqref{iter}$, but rather a revised scheme which accounts for integrator leak on the (single) integrator used in the beta-encoder implementation (see Figure 1).    The revised beta-encoding scheme, with slightly different initial conditions, becomes
\begin{eqnarray}
u_1 &=& x \nonumber \\
b_1 &=& Q(u_1) \nonumber \\
\textrm{for $j \geq 1$ }: u_{j+1} &=& \lambda \beta(u_j - b_j) \nonumber \\
b_{j+1} &=& Q(u_{j+1}) 
\label{iterlambda}
\end{eqnarray}
where $\lambda$ is an unknown parameter in $[.9,1]$.  As long as $\tilde{\beta} = \lambda \beta > 1$, we still have that $|x - \sum_{i=0}^{N} b_{i+1} \gamma^{i}| \leq C_{\tilde{\beta}} \tilde{\beta}^{-N}$ where $C_{\tilde{\beta}} = \frac{1}{\tilde{\beta}-1}$; furthermore, we can use Theorem $\eqref{mybigtheorem}$ to recover $\tilde{\beta} = \lambda \beta$ in $\eqref{iterlambda}$, although the specific values of $\lambda$ and $\beta$ cannot be distinguished, just as the specific values of $\lambda_1$ and $\lambda_2$ in the expression for $\gamma$ could not be distinguished in GRE.

\subsection{Remarks}
Figure 2 suggests that the first positive roots of the $P_n(t)$ serve as exponentially precise approximations to $\gamma$ for values of $\gamma$ greater than $.6491$; Figure 4 suggests that the first positive root of $P_n(t)$ will approximate values of $\gamma$ up to $\gamma = .75$.  Furthermore, these figures suggest that the constants $C_1$ and $C_2$ of $\eqref{search}$ in the exponential convergence of these roots to $\gamma$ can be made much sharper, even for larger values of $\gamma$. This should not be surprising, considering that nowhere in the analysis of the previous section did we exploit the specific structure of beta-expansions obtained via the particular recursions $\eqref{iterlambda}$ and $\eqref{(simple recurs)}$, such as the fact that such sequences $(b_n)$ cannot contain infinite strings of successive 1's or -1's.  It is precisely power series with such infinite strings that are the "extremal cases" which force the bound of $.6491$ in Theorem $\eqref{trans}$.  It is difficult to provide more refined estimates for the constants $C_1$ and $C_2$ of $\eqref{search}$ in general, but in the idealized setting where beta-expansions of $0$ are available via the ideal GRE scheme $\eqref{(simple recurs)}$ without leakage,  or via the beta-encoding $\eqref{(simple recurs)}$ with $\beta = \phi$, the beta-expansions of $0$ have a very special structure:
\begin{theorem}
Consider the ideal GRE recursion $\eqref{(simple recurs)}$ with input $u_0 = u_1 = 0$ and $Q(u,v) = Q_{\alpha}^{\nu}(u,v)$, or the beta-encoder recursion $\eqref{iter}$ with $\beta = \phi$, $u_0 = 0$, and $Q(u) = Q^{\nu}(u)$.   As long as $\alpha >  \nu$ in $\eqref{(simple recurs)}$, or $\nu \leq 1$ in $\eqref{iter}$, then for each  $j \in \{0,1, ... \}$, $b_{3j}$ is equal to $-1$ or $+1$, and $b_{3j + 1} = b_{3j+2} = -b_{3j} $.    
\label{period}
\end{theorem}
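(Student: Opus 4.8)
The plan is to prove the stronger assertion that the internal state of the encoder returns \emph{exactly} to its initial value after every three time steps, and then to induct on the block index $j$. For the GRE recursion $\eqref{(simple recurs)}$ the state one must track is the pair $(u_n,u_{n+1})$, since the quantizer reads both coordinates; for the beta-encoder $\eqref{iter}$ it is the single scalar $u_n$. The two computations are otherwise line-for-line the same, so I would carry them out in parallel.

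For the GRE I would take as induction hypothesis that $u_{3j}=u_{3j+1}=0$, which holds at $j=0$ by the initialization $u_0=u_1=x=0$. Assuming it at stage $j$, I would step the recursion three times. First, $b_{3j+1}=Q_{\alpha}^{\nu}(u_{3j},u_{3j+1})=Q_{\alpha}^{\nu}(0,0)$ has argument $0\in[-\nu,\nu]$, so it lies in the flaky zone and may take either value; write $s:=b_{3j+1}\in\{-1,1\}$. Then $u_{3j+2}=u_{3j+1}+u_{3j}-s=-s$, so $b_{3j+2}=Q_{\alpha}^{\nu}(0,-s)$ has argument $-\alpha s$ of absolute value $\alpha>\nu$, hence lies strictly outside $[-\nu,\nu]$ and is forced to equal $-s$. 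Next $u_{3j+3}=u_{3j+2}+u_{3j+1}-b_{3j+2}=-s+0-(-s)=0$, so $b_{3j+3}=Q_{\alpha}^{\nu}(-s,0)$ has argument $-s$ of absolute value $1>\nu$ (recall $\nu\le\epsilon\le 1/2$ in the admissible range), so it too is deterministic and equals $-s$. Finally $u_{3j+4}=u_{3j+3}+u_{3j+2}-b_{3j+3}=0+(-s)-(-s)=0$, so $(u_{3(j+1)},u_{3(j+1)+1})=(0,0)$ and the induction closes, yielding the block $(b_{3j+1},b_{3j+2},b_{3j+3})=(s,-s,-s)$.

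The beta-encoder case runs identically with $\beta=\phi$: starting from $u_{3j+1}=0$ (true at $j=0$ since $u_1=\phi x=0$), $b_{3j+1}=Q^{\nu}(0)$ is flaky, so set $s:=b_{3j+1}$; then $u_{3j+2}=\phi(0-s)=-\phi s$ has $|u_{3j+2}|=\phi>1\ge\nu$, so $b_{3j+2}=-s$; then $u_{3j+3}=\phi(-\phi s-(-s))=\phi s(1-\phi)=-s$ by the golden-ratio identity $\phi(\phi-1)=1$; since $|u_{3j+3}|=1\ge\nu$ we get $b_{3j+3}=-s$; and $u_{3j+4}=\phi(-s-(-s))=0$, closing the induction. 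Stringing the blocks together gives the bitstream $(s_0,-s_0,-s_0,s_1,-s_1,-s_1,\dots)$ with each $s_j\in\{-1,1\}$ unconstrained, which is the periodic pattern in the statement.

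There is no substantial obstacle here; the entire content is the bookkeeping above. The only steps requiring care are the second and third quantizer evaluations within each block, where one must verify that the argument is pushed strictly outside the flaky interval $[-\nu,\nu]$ --- this is exactly where $\alpha>\nu$ (together with the automatic $\nu<1$, needed for the third step) is used in the GRE, and where $\phi>1\ge\nu$ is used in the beta-encoder. The one genuinely borderline situation is $\nu=1$ in the beta-encoder at the third step, where the argument $-s$ sits on the edge of the flaky zone; under the tie-breaking convention $Q^{\nu}(\pm\nu)=\pm1$ (consistent with the ``$-1$ or $1$'' clause of $\eqref{(fqs)}$) the conclusion persists, and otherwise one reads the hypothesis as $\nu<1$.
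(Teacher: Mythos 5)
Your proof is correct, and it is exactly the ``straightforward'' three-step induction on the state (showing $(u_{3j},u_{3j+1})=(0,0)$ for the GRE and $u_{3j+1}=0$ for the beta-encoder returns after each block) that the paper omits; your block pattern $(b_{3j+1},b_{3j+2},b_{3j+3})=(s,-s,-s)$ is also the indexing actually used later in the paper's factorization $P_N(t)=(1-t-t^2)R_N(t)$, so the off-by-one in the theorem's statement ($b_{3j}$ with bits starting at $b_1$) is a typo you have silently repaired. Your two caveats are well taken and worth recording: the third GRE bit needs $1>\nu$ in addition to the stated $\alpha>\nu$ (automatic under the paper's standing assumption $\nu\le\epsilon\le .5$ from Theorem \ref{range}), and at $\nu=1$ in the beta-encoder the third bit sits on the boundary of the flaky zone, so one needs either a tie-breaking convention or the strict inequality $\nu<1$.
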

The proof is straightforward, and we omit the details.
\\ \\
Theorem $\eqref{period}$ can be used to prove directly that $\gamma$ must be the first positive root of the polynomials $P_N(t) = b_1 + \sum_{j=1}^{3N} b_{j+1} t^j$, when $\gamma = \phi^{-1}$ in either $\eqref{(simple recurs)}$ or $\eqref{iter}$.  Indeed, $P_N(t)$ can be factored as follows:
\begin{eqnarray}
P_N(t)  &=& \sum_{j=0}^{N} (b_{3j+1} - b_{3j+1} t - b_{3j+1} t^2) t^{3j} \nonumber \\
        &=& (1 - t - t^2) \sum_{j=1}^{N} b_{3j+1} t^{3j} \nonumber \\
        &=& (1 - t - t^2) R_N(t)
\end{eqnarray}
where $R_N(t)$ is a polynomial with random coefficients of the form $\sum_{j=0}^{N} \pm t^{3j}$.
\\
\\
$P_N(t) = (1 - t - t^2)R_N(t)$ clearly has a root at $t = \phi^{-1}$, and this root must be the only root of $P_N(t)$ on $[0, \phi^{-1})$, since on this interval $R_N(t)$ is bounded away from $0$ by $|R_N(t)| \geq 1 - \frac{t^3}{1 - t^3} \geq 1 - \frac{\phi^{-3}}{1 - \phi^{-3}} \approx .691$.  We can also obtain a lower bound on $|P_N'(\phi^{-1})| = -(1 + 2\phi^{-1})R_N(\phi^{-1})$ by $|P_N'(\phi^{-1})| \geq |1 + 2\phi^{-1}| |1 -  \frac{\phi^{-3}}{1 - \phi^{-3}}| \geq 1.545$.  Note that this bound holds also for the infinite sum $F(t) = b_1 + \sum_{j=1}^{\infty} b_{j+1} t^j$, and that the bound of $|F'(\gamma)| \geq 1.545$ is much sharper than the bound on $|F'(\gamma)|$ given by Theorem $\eqref{trans}$; e.g., $\delta_{.63} = .07$, and $\delta_{.6491} = .00008$ (see $\cite{1}$).  Similar bounds on the derivatives $|P_N'(\gamma)|$ and $|F_N'(\gamma)|$ corresponding to beta-expansions of $0$ in a base $\gamma$ close to $\phi^{-1}$ should hold, leading to sharper estimates on the constants $C_1$ and $C_2$ of $\eqref{search}$ in the case where beta-expansions of $0$ are available.    

\section{Conclusion}
In this paper, we have shown that golden ratio encoders are robust with respect to leakage in the delay elements of their circuit implementation.  Although such leakage may change the base $\gamma$ in the reconstruction formula $|y - \sum_{j=1}^{N} b_j \gamma^j| \sim  O(\gamma^N)$ , we have shown that exponentially precise approximations $\tilde{\gamma}$ to $\gamma$ can be obtained from the bitstreams of a pair $(x,-x)$, and such approximations $\tilde{\gamma}$ are sufficient to reconstruct subsequent input $y$ by $|y - \sum_{j=1}^{N} b_j \tilde{\gamma}^j| \sim  O(\gamma^N)$.   
\\
\\
Our method can be extended to recover the base $\beta$ in general beta-encoders, as long as $\beta$ is known a priori to be sufficiently large; e.g. $\beta \geq 1.54$.   Our method is similar to the method proposed in $\cite{3}$ for recovering $\beta$ in beta-encoders when $\{0,1\}$-quantizers are used, except that our method does not require a fixed reference level, which is difficult to measure with high precision in practice.

\begin{figure*}[htbp]
    \mbox{
      \subfigure[]{ \includegraphics[width=2.5in]{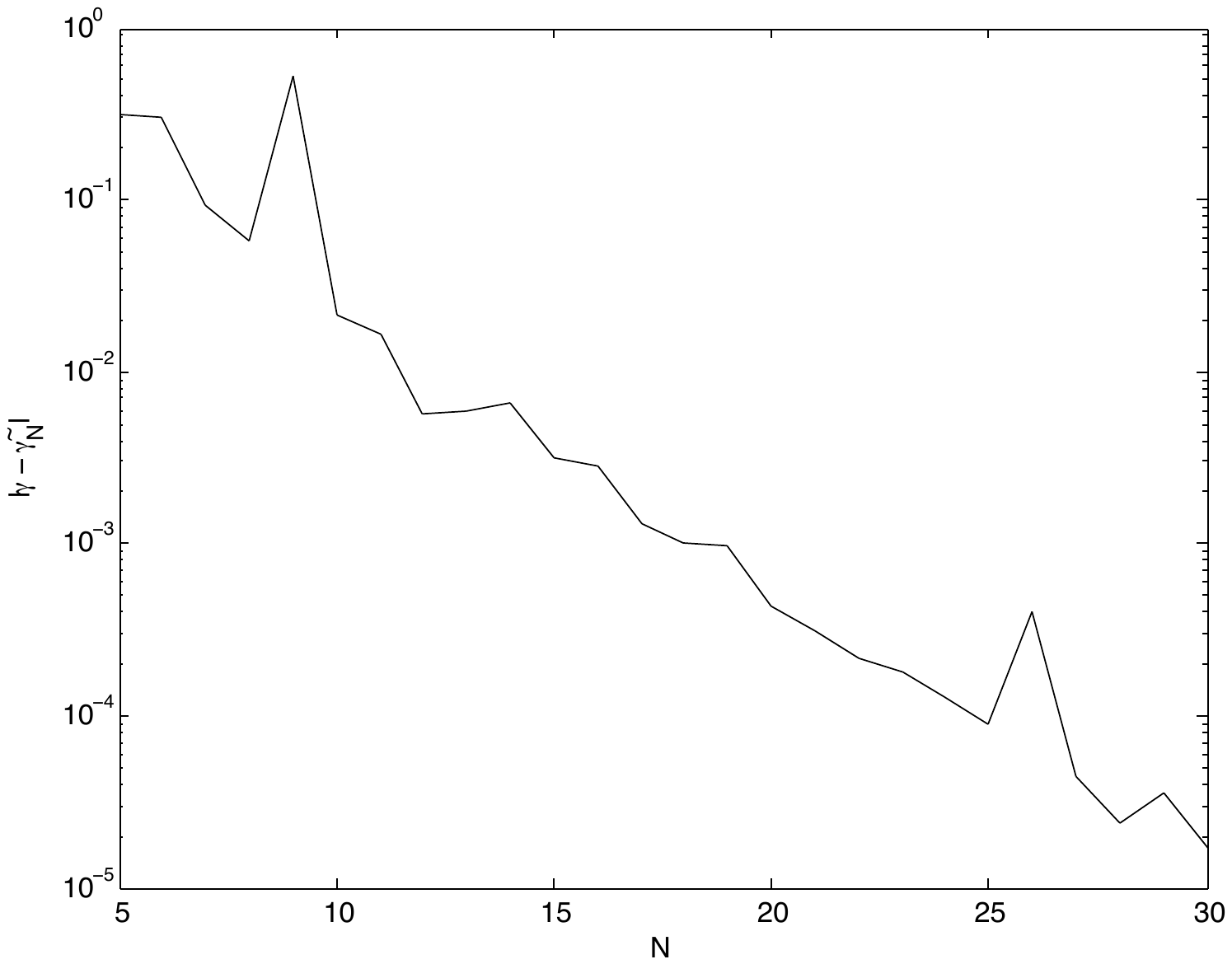}}
      \subfigure[]{\includegraphics[width=2.5in]{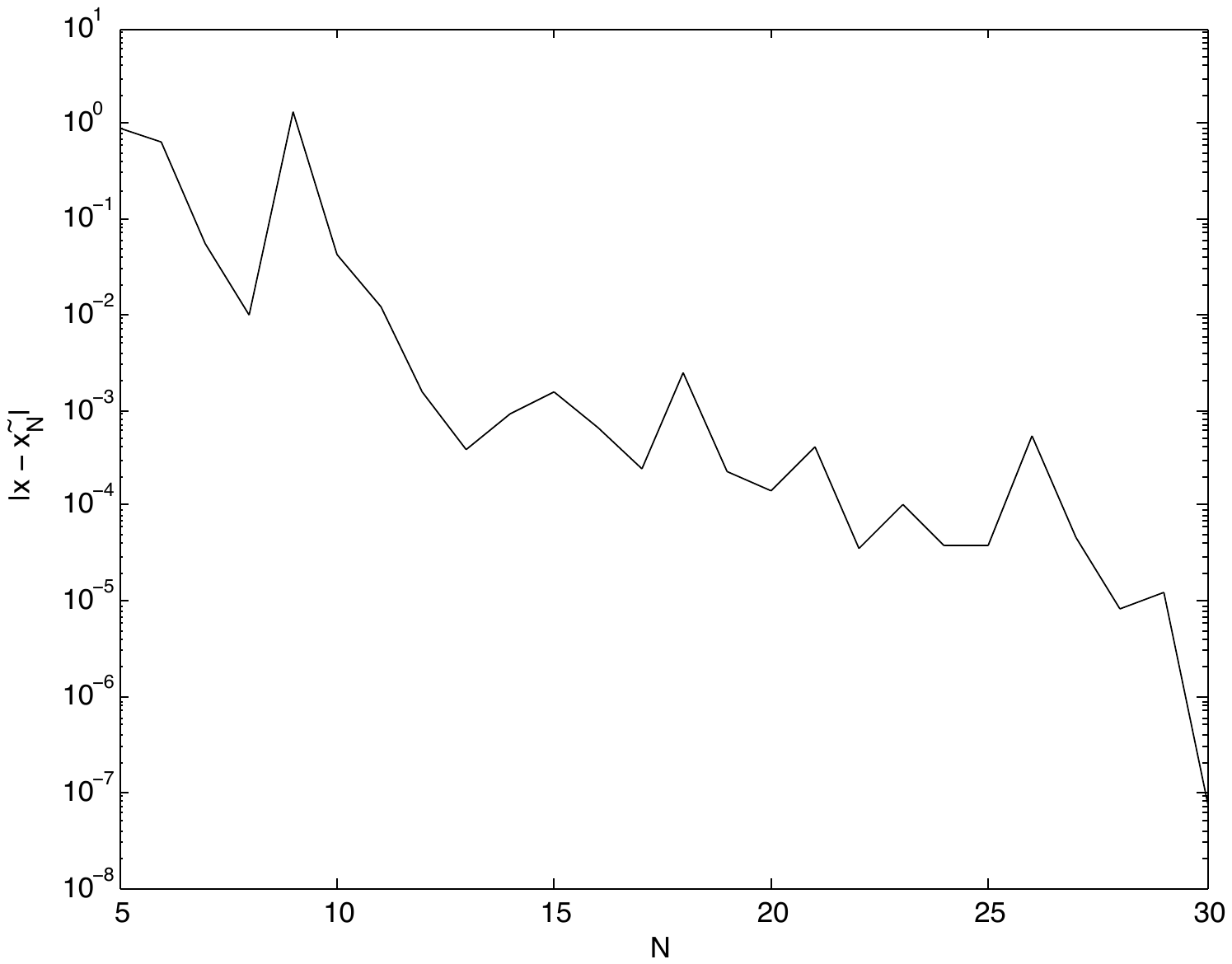}} 
      }
    \caption{ {\small (a) $N$ versus $ | \gamma - \tilde{\gamma}_N | $, where $\tilde{\gamma}_N$ is obtained via a 10 step Newton Method approximation to the first root of the polynomial $\bar{P}_N$ starting from $x_0 = .618$.  For each $N$, $| \gamma - \tilde{\gamma}_N |$ is the worst-case error among 100 experiments corresponding to different $(x_j,-x_j)$ pairs and different $\gamma_j$ chosen randomly from $[.618,.7]$.  (b) $N$ versus $ | x - \tilde{x}_N | $, where $\tilde{x}_N$ is reconstructed from $\tilde{\gamma}_N$ using the first $N$ bits.  The quantizer used in this experiment is $Q_{\alpha}^{\nu}(u,v)$ with $\nu = .3$ and $\alpha$ chosen randomly in the interval $[1.7,2]$.} }
 \label{figure4}
\end{figure*}

\begin{figure*}[htbp]
\mbox{
\subfigure[]{\includegraphics[width=2in]{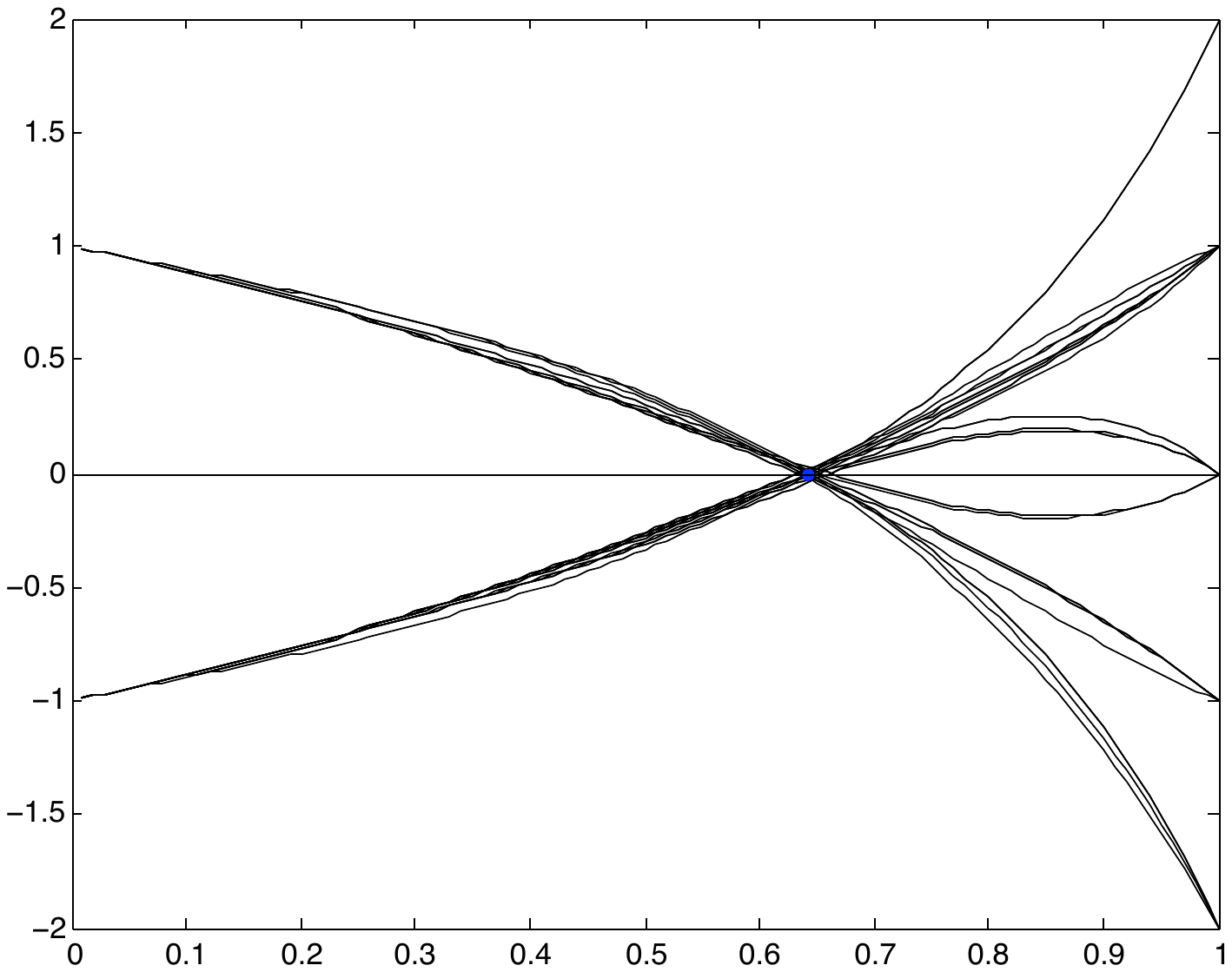}}
\subfigure[]{\includegraphics[width=2in]{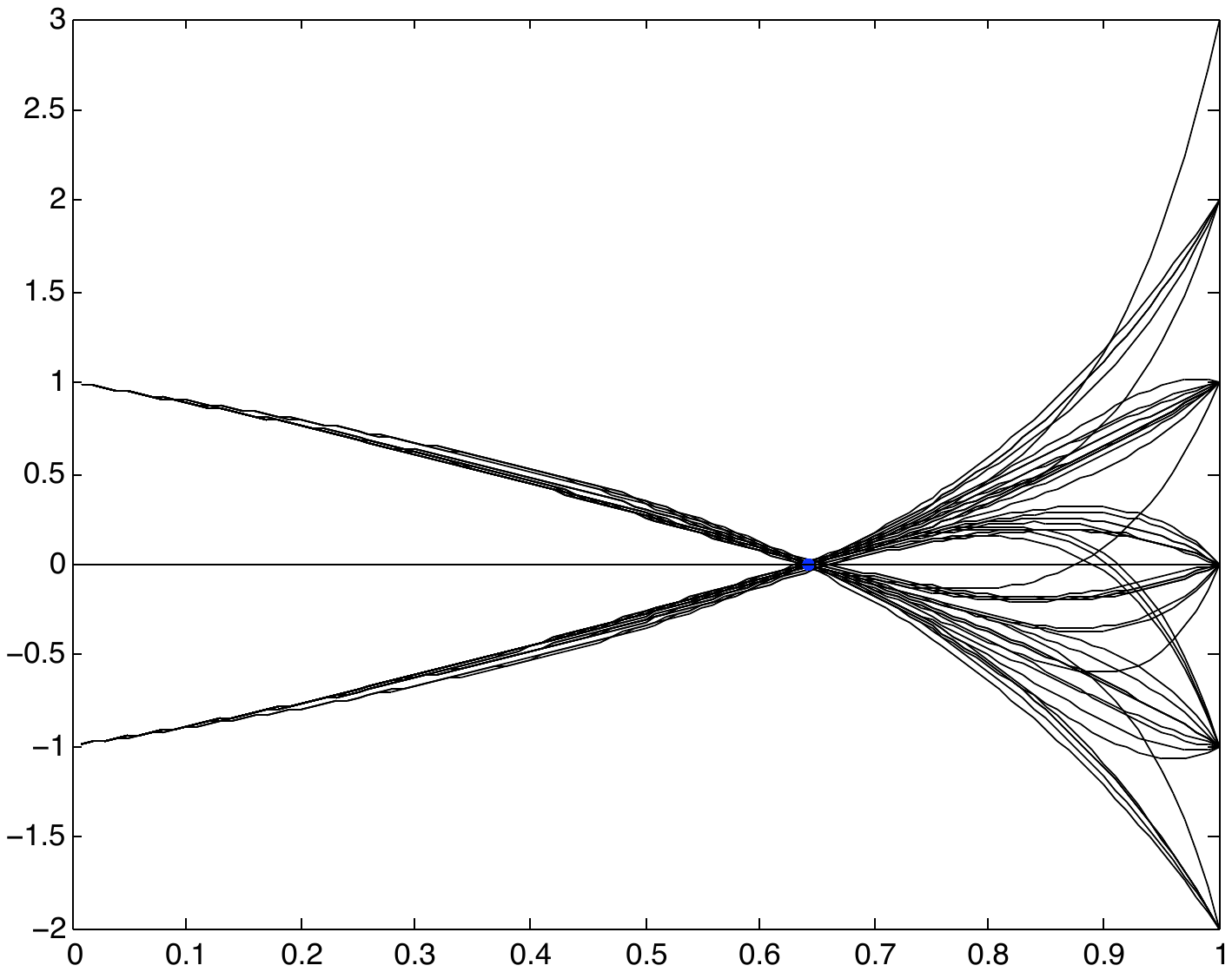}} 
\subfigure[]{\includegraphics[width=2in]{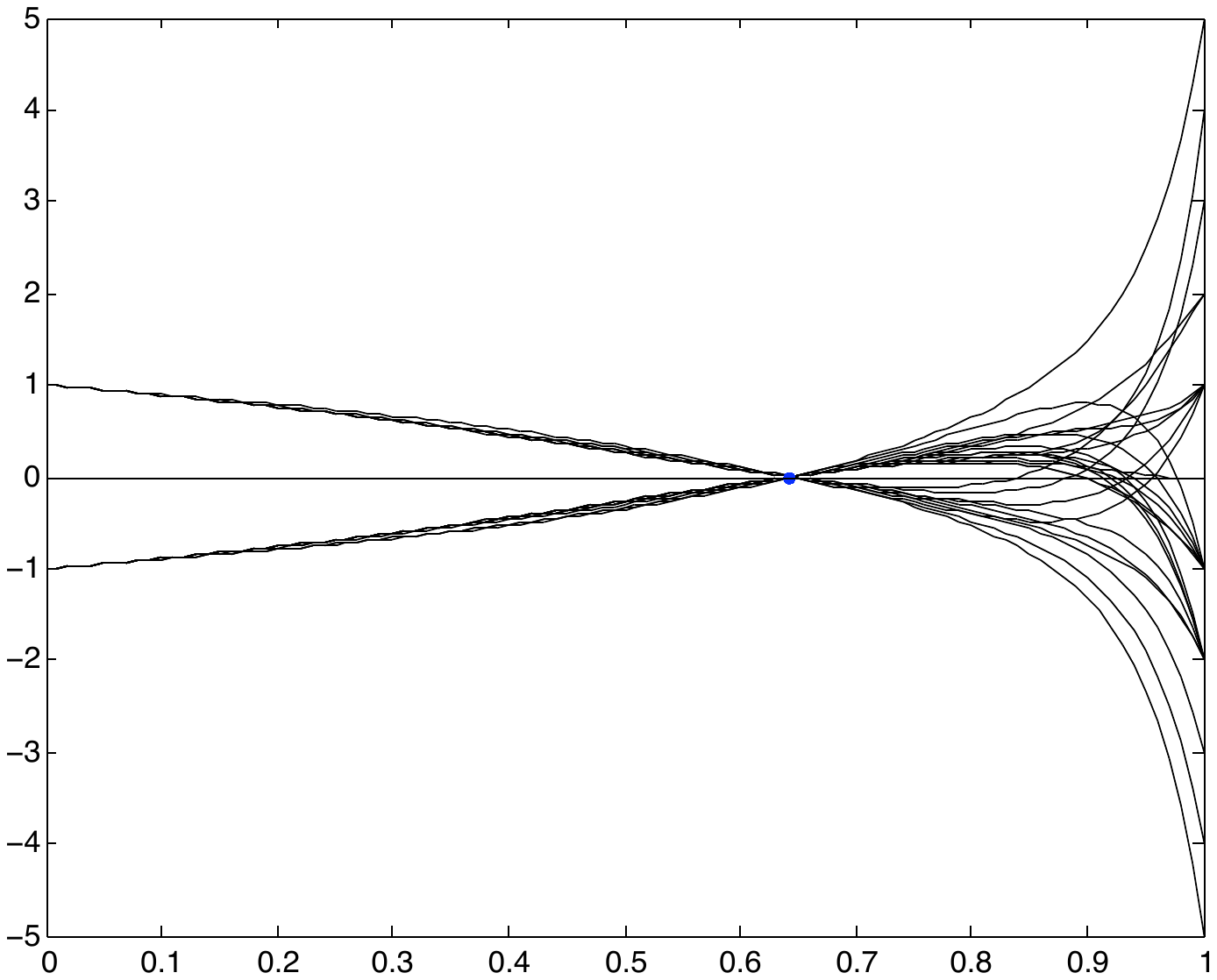}}
}
\caption{{\small The graphs of (a) $\bar{P}_{8}$, (b) $\bar{P}_{16}$, and (c) $\bar{P}_{32}$ for several pairs $(x_j,-x_j)$ corresponding to $\gamma = .64575$.  These polynomials can have several roots on the unit interval, but the first of these roots must become exponentially close to $\gamma$ as $N$ increases.    The quantizer used is $Q_{\alpha}^{\nu}(u,v)$ with parameter values $\nu = .3$, and $\alpha = 2$.}}
\label{figure 5}
\end{figure*}

\begin{figure*}[htbp]
\mbox{
\subfigure[]{\includegraphics[width=2in]{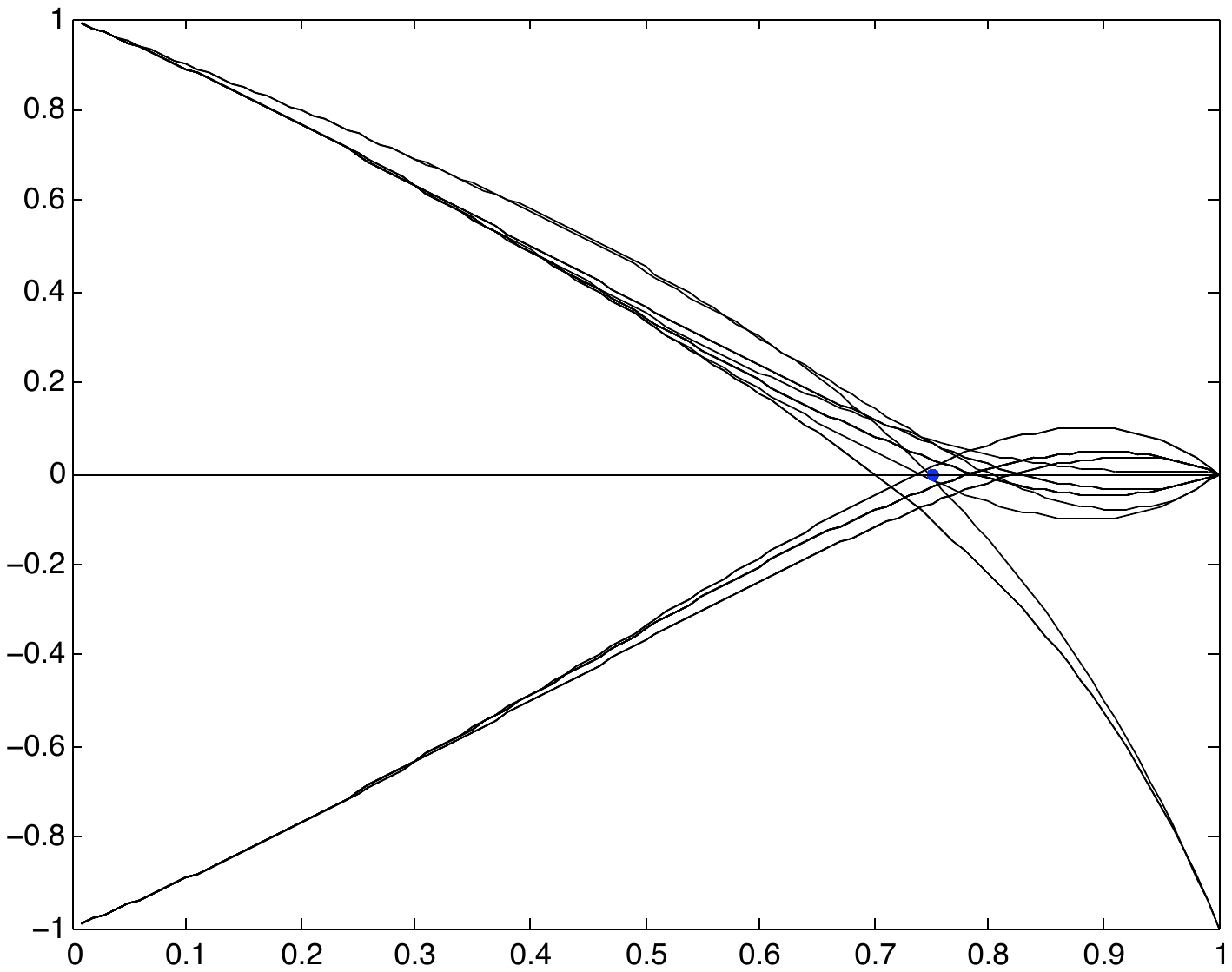}}
\subfigure[]{\includegraphics[width=2in]{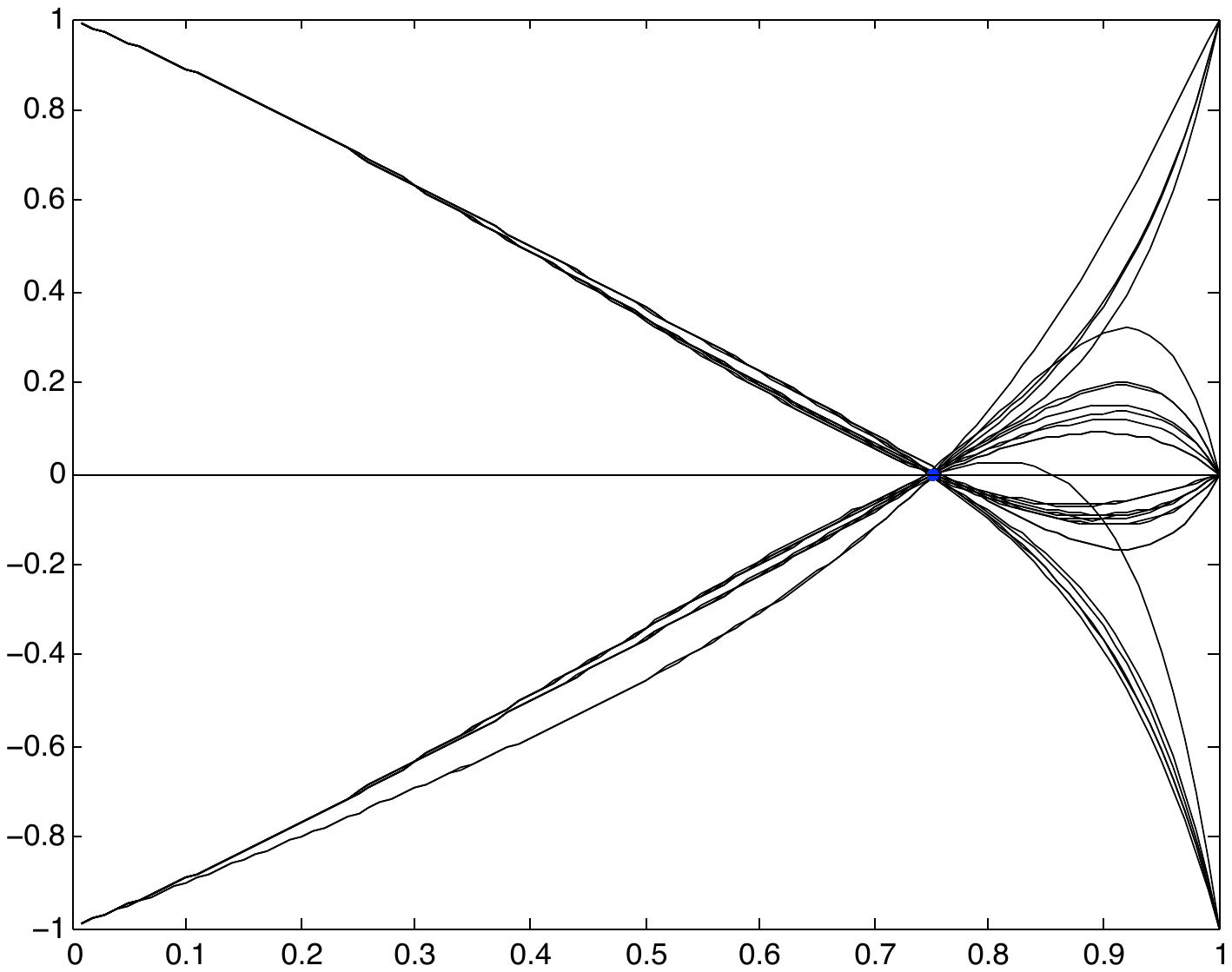}} 
\subfigure[]{\includegraphics[width=2in]{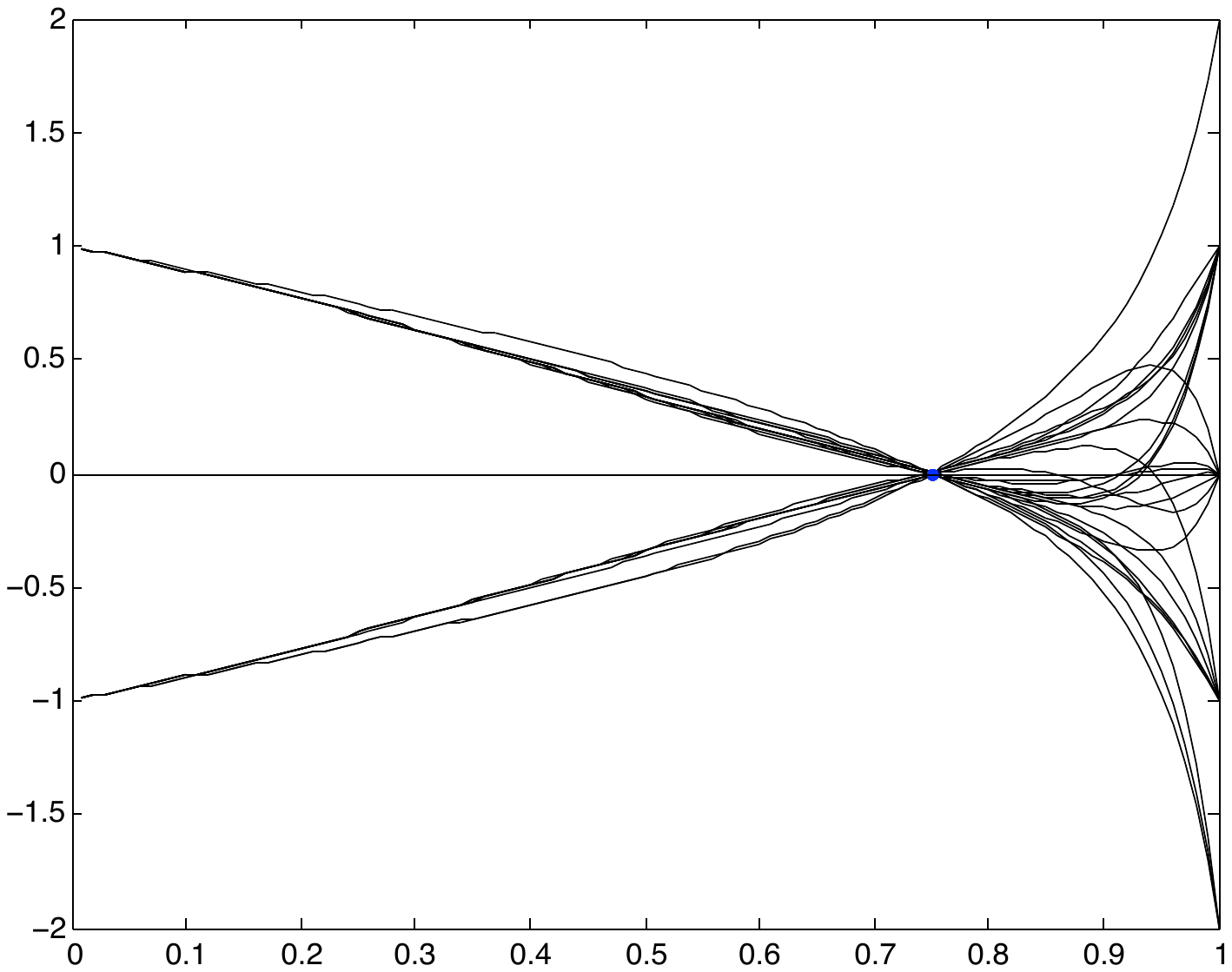}}
}
\caption{{\small The graphs of (a) $\bar{P}_{8}$, (b) $\bar{P}_{16}$, and (c) $\bar{P}_{32}$ for several pairs $(x_j,-x_j)$ corresponding to $\gamma = .75$.  These graphs suggest that the first positive root of these polynomials becomes exponentially close to $\gamma$ as $N$ increases, although we do not have a proof of this result for values of $\gamma$ greater than approximately $.65$.   The quantizer used is $Q_{\alpha}^{\nu}(u,v)$ with parameter values $\nu = .3$, and $\alpha = 2$.}}
\label{figure 6}
\end{figure*}

\section{{\bf Appendix}: Proof of Theorem $\eqref{range}$}

\begin{figure*}[htbp]
\begin{center}
\includegraphics[width=4in]{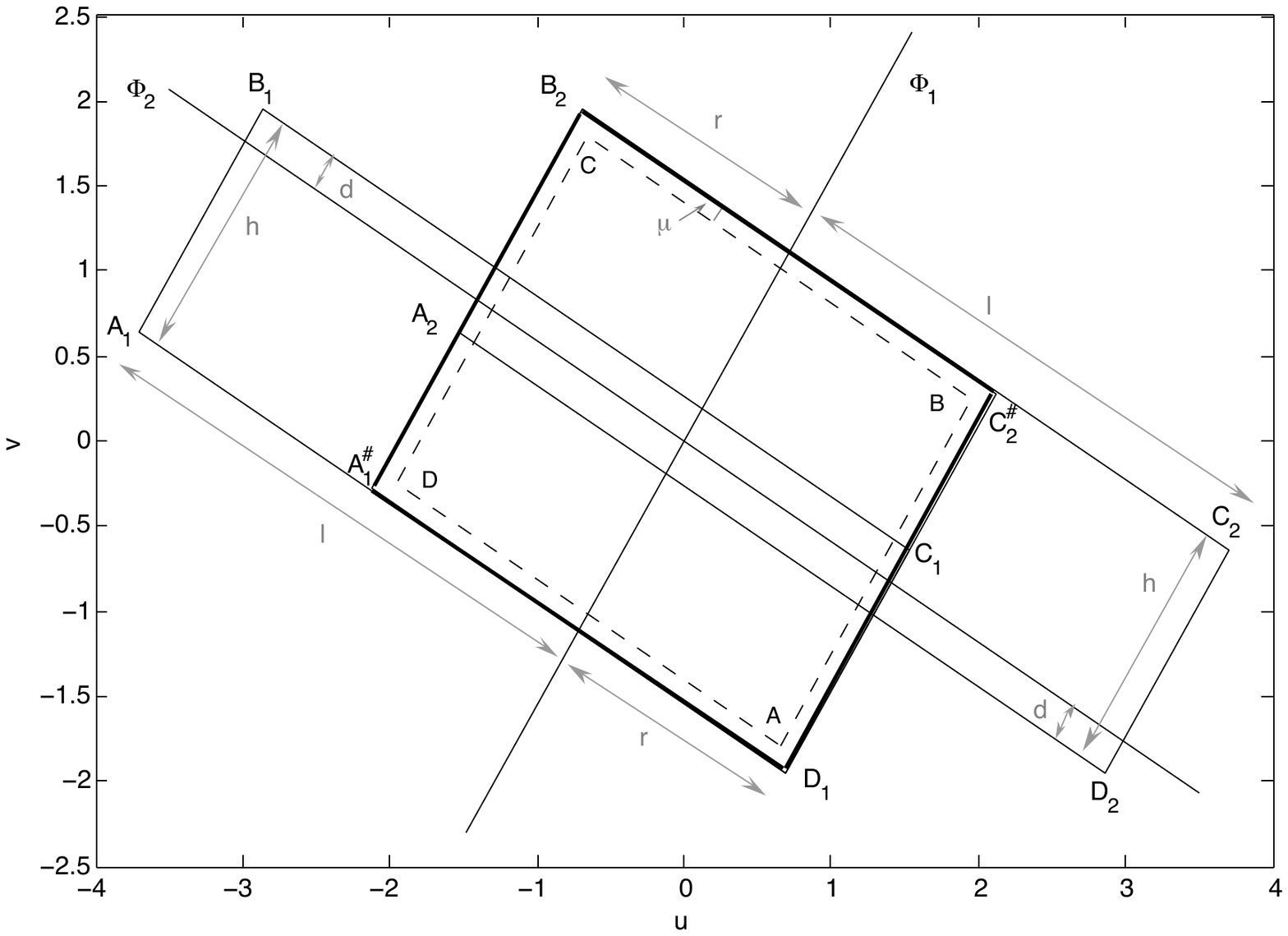}
\caption{ {\small The rectangle $R = A_1^{\#} B_2 C_2^{\#} D_1$ (bold outline) is a positively invariant set for the map $T_{\alpha}^{\nu}$.  In fact, $T_{\alpha}^{\nu} (R) $ is contained in the smaller rectangle $ABCD$ (dashed outline), illustrating that the revised GRE scheme $\eqref{(lambda recurs)}$ is robust with respect to small additive errors.  Here, $\Phi_1$ and $\Phi_2$ are the eigenvectors of the matrix in A in $\eqref{matrix}$ below.  In this figure, $\lambda_1 = \lambda_2 = .96$, and $\mu$ is taken to be $.0625$. The length parameters $l, r, h,$ and $d$ will be discussed later.} }
\label{Figure 5}
\end{center}
\end{figure*}

In this section we prove Theorem $\eqref{range}$, which provides a range within which the "flakiness" parameter $\nu$ and the "amplifier" parameter $\alpha$ in the quantizer $Q_{\alpha}^{\nu}(u,v)$ can vary from iteration to iteration, without changing the fact that the sequences $(u_n)$ produced by the scheme $\eqref{(lambda recurs)}$ will be bounded.  The derived range for $\nu$ and $\alpha$ is independent of the specific values of the parameters $(\lambda_1, \lambda_2) \in [.9,1]^2$ in $\eqref{(lambda recurs)}$.  
\\
\\
The techniques of this section are borrowed in large part from those used in $\cite{5}$ to prove a similar result for the ideal GRE scheme $\eqref{(simple recurs)}$; i.e., taking $\lambda_1 = \lambda_2 = 1$ in $\eqref{(T_Q)}$.  As is done in $\cite{5}$, we first observe that the recursion formula $\eqref{(lambda recurs)}$ corresponding to the leaky GRE  is equivalent to the following piecewise affine discrete dynamical system on $\mathbb{R}^2$ :

\begin{equation}
		\left[\begin{array}{cl}
	u_{n+1} \\
	u_{n+2}
	   \end{array}\right]
           =T_{\alpha}^{\nu}\left[\begin{array}{cl}
	u_n \\
	u_{n+1}
	   \end{array}\right]
           \label{(DS)}
\end{equation}

where

\begin{equation}
	T_{\alpha}^{\nu}:	\left[\begin{array}{cl}
	u \\
	v
	   \end{array}\right]
           \rightarrow       \left[\begin{array}{cc}
	0 & 1 \\
	\lambda_1 \lambda_2 & \lambda_1
	   \end{array}\right] 
           \left[\begin{array}{cl}
	u \\
	v
	   \end{array}\right] 
           - Q_{\alpha}^{\nu}(u,v) \left[\begin{array}{cl}
	0 \\
	1
	   \end{array}\right]
             \label{(T_Q)}
 \end{equation}
We will construct a class of subsets $R = R(\mu) = R(\lambda_1,\lambda_2,\mu)$ of $\mathbb{R}^2$ for which $T_{\alpha}^{\nu}(R) + B_{\mu}(0) \subset R$, where $B_{\mu}(0)$ is the disk of radius $\mu$ centered at the origin; i.e. $B_{\mu}(0) = \{ (u,v): u^2 + v^2 \leq \mu^2 \}$.  Note that these sets are not only { \it positively invariant sets } of the map $T_{\alpha}^{\nu}$, but have the additional property that if $(u_0, v_0) \in R(\mu)$, the image $(u_{n+1}, v_{n+1}) = T_{\alpha}^{\nu} (u_n, v_n)$ may be perturbed at any time within a radius of $\mu$ (for example, by additive noise), and the the resulting sequence $(u_n)_{n=0}$ will still remain bounded within $R(\mu)$ for all time $n$.  
\\
\\
We refer the reader to Figure 5 as we detail the construction of the sets $R(\mu)$.   Rectangles $A_1 B_1 C_1 D_1$ and $A_2 B_2 C_2 D_2$ in Figure 5 are designed so that their respective images under the affine maps $T_1$ and $T_2$ (defined below) are both equal to the dashed rectangle $ABCD$.  
\begin{eqnarray}
	T_1: \left[\begin{array}{cl}
	u \\
	v
	   \end{array}\right]   \rightarrow   \left[\begin{array}{cc}
	0 & 1 \\
	\lambda_1 \lambda_2 & \lambda_1
	   \end{array}\right] 
           \left[\begin{array}{cl}
	u \\
	v
	   \end{array}\right] 
           - \left[\begin{array}{cl}
	0 \\
	1
	   \end{array}\right],  \nonumber \\
          T_2 : \left[\begin{array}{cl}
	u \\
	v
	   \end{array}\right]   \rightarrow   \left[\begin{array}{cc}
	0 & 1 \\
	\lambda_1 \lambda_2 & \lambda_1
	   \end{array}\right] 
           \left[\begin{array}{cl}
	u \\
	v
	   \end{array}\right] 
           + \left[\begin{array}{cl}
	0 \\
	1
	   \end{array}\right].
	   \label{T1}
	   \end{eqnarray}     
That is, rectangles $A_1 B_1 C_1 D_1$ and $A_2 B_2 C_2 D_2$ will satsify $T_1(A_1 B_1 C_1 D_1) = ABCD = T_2(A_2 B_2 C_2 D_2)$. More specifically, $T_1(A_1) = T_2(A_2) = A$, $T_1(B_1) = T_2(B_2) = B$, and so on.  Since the rectangle $R = A_1^{\#} B_2 C_2^{\#} D_1$ is contained within the union of $A_1 B_1 C_1 D_1$ and $A_2 B_2 C_2 D_2$, $R$ is a positively invariant set for any map $T(u,v)$ satisfying
\begin{equation}
T(u,v) =  \left\{\begin{array}{cl}
	T_1(u,v),  & (u,v)  \in R \setminus A_2 B_2 C_2 D_2   \\
	T_2(u,v),   &(u,v) \in R  \setminus A_1 B_1 C_1 D_1 \\
	T_1(u,v) \textrm{  } or \textrm{  } T_2(u,v), &(u,v) \in H \\
	   \end{array}\right.
	   \label{admin}
\end{equation}  
where $H =  A_1 B_1 C_1 D_1 \cap A_2 B_2 C_2 D_2$.  In particular, if the parameters $\alpha$ and $\nu$ in the map $T_{\alpha}^{\nu}$ are chosen such that the intersection of $R$ and the strip $F = \{(u,v): -\nu < u + \alpha v < \nu \}$ is a subset of $H$, then $T_{\alpha}^{\nu}$ is of the form $\eqref{admin}$.  Indeed, $F$ is the region of the plane in which the quantizer $Q_{\alpha}^{\nu}(u,v)$ operates in flaky mode. This geometric setup is clarified with a figure, which we provide in Figure 6.

\begin{figure}[htbp]
\begin{center}
\includegraphics[width=3in]{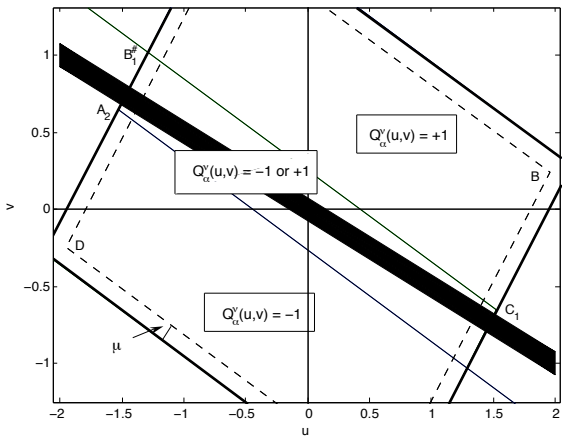}
\end{center}
\label{figure3}
\caption{ {\small The flaky quantizer $Q_{\alpha}^{\nu}(u,v)$ outputs either $-1$ or $+1$ when the input $(u,v)$ belongs to the strip $F = \{(u,v): -\nu < u + \alpha v < \nu \}$ (shaded above).  Here $\mu = .0625$, $\alpha = 2$, and $\nu = .15$.  For these parameter values, the intersection of $F$ and $R$ is a subset of  $A_1 B_1 C_1 D_1 \cap A_2 B_2 C_2 D_2$, meaning that the scheme $\eqref{(lambda recurs)}$ produces bounded sequences $(u_n)$, when implemented with this quantizer, and with input $(u_0,u_1) \in R$.}}
\end{figure}

It remains to verify the existence of at least one solution to the setup in Figure 5 for each $(\lambda_1,\lambda_2) \in [.9,1]^2$.  This can be done, following the lead of $\cite{5}$, in terms of the parameters defined in Figure 5.
\\
\\
Note that the matrix 
\begin{equation}
A =  \left[\begin{array}{cc}
	0 & 1 \\
	\lambda_1 \lambda_2 & \lambda_1
	   \end{array}\right]
	   \label{matrix}
\end{equation}
in $\eqref{T1}$ has as eigenvalues $\epsilon_1 = \frac{\lambda_1 + \sqrt{\lambda_1^2+ 4 \lambda_1\lambda_2} }{2}$ and $- \epsilon_2  = -\big( \frac{- \lambda_1 + \sqrt{\lambda_1^2 - 4\lambda_1\lambda_2}}{2} \big)$.   In particular, when $\lambda_1 = \lambda_2 = 1$, we have that $\epsilon_1 = \phi \approx 1.618$ and $\epsilon_2 = \phi^{-1} \approx .618$.  The eigenvalues $\epsilon_1$ and $-\epsilon_2$ have respective normalized eigenvectors 
\begin{center}
$\Phi_1 = s_1^{-1}\left[\begin{array}{cl}
	1 \\
	\epsilon_1
	   \end{array}\right]$ 
	   and $\Phi_2 = - s_2^{-1} \left[\begin{array}{cl}
	1 \\
	\epsilon_2
	   \end{array}\right]$,
\end{center}
where $s_1 = \sqrt{1 + \epsilon_1^2}$ and $s_2 = \sqrt{1 + \epsilon_2^2}$.   It follows that the affine map $T_1$ acts as an expansion by a factor of $\epsilon_1$ along $\Phi_1$ and a reflection followed by contraction by a factor of $\epsilon_2$ along $\Phi_2$, followed by a vertical translation of +1.  $T_2$ is the same as $T_1$ except with a vertical translation of -1 instead of +1.  After some straightforward algebraic calculations, the mapping relations described above imply that the parameters in Figure 5 are given by
\begin{eqnarray}
h  &=& \frac{2\mu}{1-\epsilon_1} + \frac{2s_1}{\epsilon_1(\epsilon_1-1)(\epsilon_1 + \epsilon_2)} \nonumber \\
d  &=& \frac{\mu}{1-\epsilon_1} + \frac{s_1(2 - \epsilon_1)}{\epsilon_1(\epsilon_1-1)(\epsilon_1 + \epsilon_2)} \nonumber \\
l  &=& \frac{\mu}{1 - \epsilon_2} + \frac{s_2(2 - \epsilon_2)}{\epsilon_2(1 - \epsilon_2)(\epsilon_1 + \epsilon_2)} \nonumber \\
r &=& \frac{\mu}{1 - \epsilon_2} + \frac{s_2}{(1-\epsilon_2)(\epsilon_1 + \epsilon_2)}
\label{points}
\end{eqnarray}
The existence of the overlapping region $H = A_1 B_1 C_1 D_1 \cap A_2 B_2 C_2 D_2$ is equivalent to the condition $d > 0$ which in turn is equivalent to the condition $\mu < \frac{s_1(2-\epsilon_1)}{\epsilon_1(\epsilon_1 + \epsilon_2)}$.  This expression is minimized over the range $(\lambda_1,\lambda_2) \in V = [.9,1]^2 $ when $\lambda_1 = \lambda_2 = 1$, in which case this constraint simplifies to $\mu < \frac{2 - \phi}{\sqrt{\phi^2+1}} \approx .2008$. 
\\
\\
We are interested primarily in quantizing numbers in $[-1,1]$, so that this is the range of interest for the input $u_1 = x$; for $u_0$ we simply take $0$.   The set $\{0\} \times [-1,1] \subset  \bigcap_{(\lambda_1,\lambda_2) \in V} R(\lambda_1,\lambda_2,\mu)$ is equivalent to the condition that the line  $v = mu + b$ which passes through the points $B$ and $C$ in Figure 5 have y-intercept $b \geq 1$ for all $(\lambda_1,\lambda_2) \in V$.   This line has slope $m = -\epsilon_2$, and passes through the point $C$, so that its y-intercept is given by $b = s_1^{-1}(\epsilon_1 + \epsilon_2)(h - d)$.  Rearranging terms, this implies that $b \geq 1$ if and only if 
\begin{equation}
\mu \leq \frac{(\sqrt{1 + \epsilon_1^2})(2 - \epsilon_1)}{\epsilon_1 + \epsilon_2}.
\label{line}
\end{equation}
The right hand side of the above inequality is bounded below over the range $(\lambda_1,\lambda_2) \in V$ by $\frac{\sqrt{1 + (.9\phi)^2}(2 - \phi)}{\sqrt{5}} \approx .301$, so that indeed $\{0\} \times [-1,1] \subset R(\lambda_1,\lambda_2,\mu)$ is satisfied independent of $(\lambda_1,\lambda_2) \in V$, for all admissable $\mu \in [0,.2008] $.   
\\
\\
In practice, the "flaky" parameter $\nu$ of the quantizer $Q_{\alpha}^{\nu}$ is not known exactly; we will however be given a tolerance $\delta$ for which it is known that $|\nu| \leq \delta$.  It follows that for each $\delta$, we would like a range $(\alpha_{min}, \alpha_{max})$ for the "amplifier" $\alpha$ such that the GRE scheme $\eqref{(lambda recurs)}$, implemented with quantizer $Q_{\alpha}^{\nu}$, produces bounded sequences $(u_n)$ for all values of $(\lambda_1,\lambda_2) \in V$.  For a particular choice of $(\lambda_1, \lambda_2)$, it is not hard to derive an admissable range for $\alpha$ in terms of the eigenvalues $\epsilon_1$ and $\epsilon_2$ for fixed $\delta$.  We will take $\mu = 0$ in the following analysis for the sake of simplicity.  First of all, the tolerance $\delta$ must be admissable, i.e. the coordinate $(\delta, 0)$ should lie within the region $H$.  Indeed, for $\nu \in [0,\delta]$, $\alpha$ can vary within a small neighborhood of $\frac{1}{\epsilon_2}$, as long as the line with slope $-\frac{1}{\alpha}$ which passes through $(0,\delta)$ remains bounded above in the region $H$ by the line passing through $B_1$ and $C_1$.  In other words, the admissable range for $\alpha$ is obtained from the constraint $m_1 \leq -\frac{1}{\alpha} \leq m_2$, where $m_1$ is the slope  of the line through the points $\{ ( \delta, 0),  C_1 \}$ and $m_2$ is the slope of the line through the points $\{ ( \delta, 0 ), B_1^{\#} \}$ in Figure 6.   Rewriting $C_1$ and $B_1^{\#}$ in terms of the eigenvalues $\epsilon_1$ and $\epsilon_2$ via the relations $\eqref{points}$, we have that

\begin{equation}
L(\epsilon_1,\epsilon_2) \leq \alpha \leq U(\epsilon_1,\epsilon_1 + \epsilon_2)
\label{bounds}
\end{equation}
where
\begin{eqnarray}
&&L(x,y) = N(x,y)/D(x,y) \nonumber \\
&=&  \frac{x(x - 1) - (2-x)(1 - y) + \delta x(x - 1)(x + y)(1 - y)}{x((2 - x)(1-y) + y(x-1))} \nonumber
\end{eqnarray}
and 
\begin{equation}
U(x,y) = \frac{2 + xy - 2y - \delta xy(x - 1)(1 - y + x)}{x(y - 2)}.
\end{equation}
 It is not hard to show that for any fixed $y \in [.9 \phi + .9 \phi^{-1}, \phi + \phi^{-1}] \approx [2.0124, 2.236]$, the function $U(x,y)$ increases as a function of $x$ over the range $x \in [.9\phi, \phi] \approx [1.456, 1.618]$, as long as $\nu \leq  .75$.  It follows that the minimum of $U(x,y)$ over the rectangle $S = [1.456, 1.618] \times [2.0124, 2.236]$ occurs along the edge corresponding to $x = 1.456$.  But $U(1.456,y)$ decreases as a function of $y$ over the interval $[2.0124, 2.236]$, so that the minimum of $U(x,y)$ over the entire rectangle $S$ occurs at $(x,y) = (1.456, 2.236)$, giving the following uniform lower bound on $\alpha_{max}$ for $(\lambda_1,\lambda_2) \in V$:
\begin{eqnarray}
\alpha_{max} &\geq& U(1.456, 2.236) \nonumber \\
                         &\approx& 2.281 - .952 \delta.
\label{max}
\end{eqnarray} 
We proceed in the same fashion to derive a uniform upper bound on $\alpha_{min}$ over $(\lambda_1,\lambda_2) \in V$, except that we analyze the numerator and the denominator in the expression for $L(x,y)$ separately.  The numerator $N(x,y)$ increases as a function of $x$ over the range $x \in [1.456, 1.618]$ for fixed $y \in [.9\phi^{-1}, \phi^{-1}] \approx [.5562,.618]$, so that $N(x,y)$ achieves its maximum over the rectangle $T = [1.456, 1.618]\times [.5562,.618]$ along the edge corresponding to $x = 1.618$.   $N(1.618,y)$ increases as a function of $y$ over the range $y \in [.5562,.618]$, so that $N(x,y)$ achieves its maximum over $T$ at $(x,y) = (1.618, .618)$.   A similar analysis shows that the denominator $D(x,y)$ is minimized over $T$ at $(x,y) = (1.456, .618)$.  It follows that for $(\lambda_1,\lambda_2) \in [.9,1]^2$, $\alpha_{min}$ is bounded above by
\begin{eqnarray}
\alpha_{min} &\leq& N(1.618, .618)/D(1.456, .618) \nonumber \\
                        &\approx& 1.198(1+\delta)
\label{m}
\end{eqnarray}
Finally, we note that $\delta$ is admissable for all $(\lambda_1, \lambda_2)$ if and only if $L(\epsilon_1, \epsilon_2) \leq U(\epsilon_1, \epsilon_1 + \epsilon_2)$ for this value of $\delta$ and for all coresponding $\epsilon_1$ and $\epsilon_2$.  Thus an  admissable range for $\delta$ is obtained by equating the lower bound of $2.281 - .952 \delta$ in $\eqref{max}$ and the upper bound of $1.198(1+\delta)$ in $\eqref{m}$; namely, $\delta \in [0, .5037]$.

\section*{Acknowledgment}
The author would like to thank her advisor, Ingrid Daubechies, for the encouragement and insightful discussions which motivated much of this work.  The author also is grateful to Sinan G{\"u}nt{\"u}rk for his helpful feedback, as well as NSF for providing her with a Graduate Research Fellowship.   She is also indebted to Ozgur Yilmaz, along with the anonymous referees, for helpful comments that significantly improved the quality of the paper.




\begin{thebibliography}{1}

 \bibitem{1} Y. Peres, B. Solomyak, \emph{Absolute Continuity of Bernoulli Convolutions, A Simple Proof}, Mathematical Research Letters 3 (1996), 231-239.
 \bibitem{2} I. Daubechies, {\"O}. Y$\i$lmaz, \emph{Robust and practical analog-to-digital conversion with exponential precision} IEEE Transactions on Information Theory, accepted February 2006. 
 \bibitem{3} I. Daubechies, R. DeVore, C. G{\"u}nt{\"u}rk, and V. Vaishampayan, \emph{A/D conversion with imperfect quantizers} IEEE Transactions on Informations Theory, 52 (3):874-885, Mar 2006.
 \bibitem{4} B. Solomyak, \emph{On the Random Series $\sum \mp \lambda^n$ (an Erd{\"o}s Problem)},Annals of Mathematics, 142 (1995), 611-625.
 \bibitem{5} I. Daubechies, C. G{\"u}nt{\"u}rk, Y. Wang, {\"O}. Y$\i$lmaz, \emph{The golden ratio encoder}, submitted.
 \bibitem{6} I. Daubechies, R. DeVore, \emph{Reconstructing a Bandlimited Function From Very Coarsely Quantized Data: A Family of Stable Sigma-Delta Modulators of Arbitrary Order}, Ann. of Math., vol. 158, no. 2, pp. 679-710, Sept. 2003.
\bibitem{7} P. Shmerkin, B. Solomyak, \emph{Zeros of $\{-1,0,1\}$ Power Series and Connectedness Loci for Self-Affine Sets}, Experimental Math, to appear.
\bibitem{8} N. Sidorov, \emph{Almost every number has a continuum of beta-expansions}, Amer. Math. Monthly, vol. 110, pp. 838 - 842, 2003.
\bibitem{9} R. Gregorian and G.C.Temes, \emph{Analog MOS Integrated Circuits for Signal Processing} New York: Wiley, 1986, p.485.
\end{thebibliography}
\end{document}